\newtheorem{theorem}{Theorem}[section]
\newtheorem{proposition}[theorem]{Proposition}
\newtheorem{lemma}[theorem]{Lemma}
\newtheorem{remark}{Remark}
\newcommand\R{\mathbb{R}}
\newcommand{\mI}{\mathcal{I}}
\newcommand\pp{\tilde{p}}
\newcommand\llll{\tilde{l}}
\newcommand\N{\mathbb{N}}
\numberwithin{equation}{section}
\newcommand{\B}{\mathcal{B}}
\newcommand{\HH}{\mathbb{H}_\B}
\newcommand{\LBs}{\mathcal{L}_{g}^s}
\newcommand{\DB}{(-\Delta_\B)^s}
\newcommand{\DBm}{(-\Delta_\B)^{-s}}
\newcommand{\RN}{\mathbb{R}^n}
\newcommand{\Addresses}{{
  \bigskip
  \footnotesize

  Nicole Cusimano, \textsc{Basque Center for Applied Mathematics, Bilbao, Spain.}\par\nopagebreak
  \textit{E-mail address:} \texttt{ncusimano@bcamath.org}

  \medskip

  F\'elix del Teso, \textsc{Basque Center for Applied Mathematics, Bilbao, Spain.}\par\nopagebreak
  \textit{E-mail address:} \texttt{fdelteso@bcamath.org}

  \medskip
  Luca Gerardo-Giorda, 
\textsc{Basque Center for Applied Mathematics, Bilbao, Spain.}\par\nopagebreak
  \textit{E-mail address:} \texttt{lgerardo@bcamath.org}

}}
\begin{document}
\title{\bf{Numerical approximations for fractional \\elliptic equations via the method of semigroups}}

\author{\bf Nicole Cusimano, F\'elix del Teso, Luca Gerardo-Giorda}

\maketitle

\begin{abstract}
\noindent We provide a novel approach to the numerical solution of the family of nonlocal elliptic equations $(-\Delta)^su=f$ in $\Omega$, subject to some homogeneous boundary conditions $\B(u)=0$ on $\partial \Omega$, where $s\in(0,1)$, $\Omega\subset \mathbb{R}^n$ is a bounded domain, and $(-\Delta)^s$ is the spectral fractional Laplacian associated to $\B$ on $\partial \Omega$. We use the solution representation $(-\Delta)^{-s}f$ together with its singular integral expression given by the method of semigroups. By combining finite element discretizations for the heat semigroup with monotone quadratures for the singular integral we obtain accurate numerical solutions. Roughly speaking, given a datum $f$ in a suitable fractional Sobolev space of order $r\geq 0$ and the discretization parameter $h>0$, our numerical scheme converges as $O(h^{r+2s})$, providing super quadratic convergence rates up to $O(h^4)$ for sufficiently regular data, or simply $O(h^{2s})$ for merely $f\in L^2(\Omega)$. We also extend the proposed framework to the case of nonhomogeneous boundary conditions and support our results with some illustrative numerical tests.

\end{abstract}
{
\begin{spacing}{0.001}
\setcounter{tocdepth}{1}
\tableofcontents
\end{spacing}
}
\vspace{10mm}

\begin{keyword}
Fractional Laplacian, Bounded Domain, Boundary Value Problem, Homogeneous and Nonhomogeneous Boundary Conditions, Heat Semigroup, Finite Elements, Integral Quadrature.  
\end{keyword}

\begin{AMS}
35S15, 65R20, 65N15, 65N25, 41A55, 35R11, 26A33
\end{AMS}


\section{Introduction}
\label{sec:intro}
The aim of this work is to provide numerical approximations to the solution of the following fractional elliptic problem:
\begin{align}
\label{EP}
(-\Delta)^su(x)&=f(x)  \quad \quad \text{in}\qquad \Omega \\
\label{BC} 
\B (u) &=0  \ \qquad \quad \text{on}\qquad \partial \Omega ,
\end{align}
where $s\in(0,1)$, $\Omega\subset \mathbb{R}^n$ is a bounded domain, and $(-\Delta)^s$ is the spectral fractional Laplacian associated to some homogeneous boundary conditions $\B$ on $\partial \Omega$. We stress that the fractional operator is boundary condition dependent and hence from now on in this manuscript we always denote it by $(-\Delta_{\B})^s$. 

We specifically consider Dirichlet, Neumann, and Robin boundary conditions, corresponding respectively to the boundary operators
\begin{equation}\label{as:BC}\tag{BC}
\mathcal{B}(u)=u, \quad \mathcal{B}(u)=\frac{\partial u}{\partial \nu}, \quad \textup{and} \quad \mathcal{B}(u)=\kappa~u+\frac{\partial u}{\partial \nu}, \quad \kappa  >0 \quad  \textup{on}  \quad  \partial \Omega.
\end{equation}
 However, the results and techniques of this paper could be extended to fractional powers of more general operators with appropriate boundary conditions.

Let $\{\varphi_m, \lambda_m\}_{m=1}^\infty$ denote the eigenpairs of $-\Delta_\B$, that is, the nontrivial solutions of the eigenvalue problem
\begin{align*}
-\Delta \varphi_m&=\lambda_m \varphi_m \qquad  \textup{in} \quad \Omega\\
\B(\varphi_m)&=0   \quad \qquad \ \ \ \textup{on} \quad \partial \Omega.
\end{align*}
Throughout the manuscript we will assume the $\varphi_m$ to be normalized, i.e., $\int_{\Omega}\varphi_m^2~dx=1$.
Then, for all functions $u\in L^2(\Omega)$ such that $\sum_m \lambda_m^{2s} |\hat{u}_m |^2 <\infty$ with $\hat{u}_m:=\int_{\Omega} u(x)\varphi_m(x) dx$, the spectral fractional Laplacian $\DB$ is defined for $s\in (0,1)$ by 
\begin{equation}\label{eq:FLdef1}
\DB u(x)=\sum_{m=1}^\infty \lambda_m^s \hat{u}_m \varphi_m(x).
\end{equation}
Equivalently, $\DB$ can be defined through an integral formulation as follows:
\begin{equation}\label{eq:FLdef2}
\DB u(x)=\frac{1}{\Gamma(-s)}\int_0^\infty\left( e^{t\Delta_\B}u(x)-u(x)\right)\frac{d t}{t^{1+s}},
\end{equation}
where $w(x,t):=e^{t\Delta_\B}f(x)$ denotes the heat semigroup associated to the operator $\DB$, that is, the solution to the following heat equation:
\begin{align}\label{eq:HeatEq}
w_t(x,t)-\Delta w(x,t)&=0 \quad \quad \quad \text{in}\qquad \Omega\times(0,\infty), \nonumber\\
\B (w)(x,t) &=0   \qquad \quad \text{on}\qquad \partial \Omega\times(0,\infty),\\
w(x,0)&=f(x) \ \ \quad \text{in}\qquad \Omega.\nonumber
\end{align}

For suitable functions $f$, the solution of \eqref{EP}-\eqref{BC} can be expressed in terms of negative fractional powers of $-\Delta_\B$ as
\begin{equation}\label{eq:IFLspec}
u(x)=\DBm f(x):= \sum_{m=1}^\infty \lambda_m^{-s} \hat{f}_m \varphi_m(x),
\end{equation}
which in turn have an equivalent integral formulation involving the heat semigroup as follows:
\begin{equation}\label{eq:IFL}\tag{S}
(-\Delta_\B)^{-s} f(x)=\frac{1}{\Gamma(s)}\int_0^\infty e^{t\Delta_\B}f(x)\frac{d t}{t^{1-s}}.
\end{equation}
\begin{remark} 
	The equivalence of \eqref{eq:IFL} and \eqref{eq:IFLspec} is proved using the expression of the heat semigroup in terms of the eigenpairs 
\begin{equation}
\label{eq:Wseries}
e^{t\Delta_\B}f(x)=\sum_{m=1}^\infty e^{-\lambda_m t}\hat{f}_m \varphi_m(x),
\end{equation}
and the following identity valid for all $\lambda,a>0$:
\begin{equation}\label{eq:numPower}
\frac{1}{\Gamma(a)}\int_0^\infty e^{-t\lambda}\frac{d t}{t^{1-a}}= \lambda^{-a}.
\end{equation}
\end{remark}

\noindent {\bf{Notation and functional setting.}} When working with $\DB$, it is natural to consider functional spaces defined for $r\geq 0$ as
\begin{equation}\label{eq:defHH}
\HH^r(\Omega):=\left\{ \psi \in L^2(\Omega)\colon \|\psi\|_{\HH^r(\Omega)}:=\left(\sum_{m=1}^\infty\lambda_m^r|\hat{\psi}_m|^2\right)^{1/2}<\infty \right\},
\end{equation}
where  $\lambda_m$ are the eigenvalues of $-\Delta_\B$ and $\hat{\psi}_m$ are $L^2$-products of $\psi$ and the eigenfunctions $\varphi_m$. From now on in this manuscript we simply write $\|\cdot\|_r$ when referring to the norm of $\HH^r(\Omega)$ and notice in particular that $\|\cdot\|_0=\|\cdot\|_{L^2(\Omega)}$ since $\HH^0(\Omega)=L^2(\Omega)$ by definition. 

Let $H^q(\Omega)$ denote the standard Sobolev space of order $q$. On smooth domains one can show that for any non-negative integer $q$, the spaces $\HH^q(\Omega)$ can be characterized as follows (see \cite{Vid97} for the case of homogeneous Dirichlet boundary conditions):
\[
\HH^q(\Omega)=\{\psi \in H^q(\Omega) \colon \B(\Delta^j \psi)=0 ~\text{on} ~\partial \Omega ~~\text{for all non-negative integers } j<q/2 \}.
\]
On general bounded domains with Lipschitz boundary, the above characterization not necessarily holds but some useful connections between $\HH^r(\Omega)$ and $H^r(\Omega)$ can still be made (see relevant comments in Chapter 19 of~\cite{Vid97}, Section 4 in~\cite{Bon2015}, or Section 2 in~\cite{NoOtSa15}).

In light of~\eqref{eq:FLdef1}, $\DB u$ is defined for any $u\in \HH^{2s}(\Omega)$. Moreover, given $f\in \HH^{r}(\Omega)$ one expects that $\DBm f\in \HH^{r+2s}(\Omega)$ (e.g., see~\cite{CaSt16}).  The improved regularity of the solution with respect to the right-hand side of the equation will be somehow reflected in the order of convergence of our numerical approximation. 

Throughout the manuscript we denote by $C$ several different constants for notational simplicity, and use the notation $a\sim b$ when $a=Cb$ for some constant $C$ independent of $b$.

\begin{remark} If $r_1<r_2$, from~\eqref{eq:defHH} we immediately see that  $\HH^{r_2}(\Omega)\subset \HH^{r_1}(\Omega)$ and $\|\psi\|_{r_1}\leq  C\|\psi\|_{r_2}$.
\end{remark}

\noindent {\bf{Brief description of techniques and results.}} In this work we use the characterization of $\DBm f$ given by \eqref{eq:IFL} to provide a numerical approximation for the solution of the fractional elliptic problem \eqref{EP}-\eqref{BC}. More precisely, we approximate $\DBm f$ by introducing suitable monotone quadratures for the integral with respect to the singular measure $\frac{dt}{t^{1-s}}$ in \eqref{eq:IFL} and by using the method of Finite Elements (FE) to approximate $e^{t\Delta_\B}f$. 

To derive our approximation to $\DBm f$, we start by proposing and studying monotone quadrature rules for integrals of the form $\int_0^T \rho(t)\frac{dt}{t^{1-s}}$. We do this by considering a time discretization parameter $\varDelta t>0$ and suitable interpolating functions $\mI[\rho](t)$ chosen according to the regularity of $\rho$. We then apply these quadratures to the case in which $\rho(t)=e^{t\Delta_\B}f$ and obtain an approximation error $O(\varDelta t^{\frac{r+2s}{2}})$ for $f\in \HH^{r}(\Omega)$ with $r\in[0,4-2s]$. 

The tail of the integral representation of $\DBm f$, namely $\int_T^\infty e^{t\Delta_\B}f(x)\frac{d t}{t^{1-s}}$, is simply truncated after a suitable choice of $T$. More precisely, the exponential decay in time of $e^{t\Delta_\B}f$ allows us to select $T\sim \log(\varDelta t^{-1})$ and to obtain an error comparable to the one given by the quadrature approximation of the rest of the integral. 

The analytic expression of $e^{t\Delta_\B}f$ involves the eigenpairs of $-\Delta_\B$ which on general domains are not known explicitly. We consider the classical theory of FE (e.g., see~\cite{Vid97}) to produce suitable approximations of the heat semigroup. By exploiting well-established error estimates with optimal error decay depending on the regularity of the initial condition $f$, we finally prove convergence of our approach.
In particular, given a spatial discretization parameter $h>0$, a right-hand side $f\in \HH^{r}(\Omega)$ and a FE approximation of order $k+1$, our numerical method produces errors $O(h^{\min\{k+1,r+2s\}})$.

In this work, we also consider a modification of \eqref{EP}-\eqref{BC} in which the nonlocal operator is coupled to non-homogeneous boundary conditions and show how to adapt our techniques to this framework. We conclude by providing some numerical experiments corroborating our main results in different cases of interest.

\noindent{\bf{Relevant works.}} Fractional order differential operators and the method of semigroups are classical analysis subjects that can be found in many seminal books, such as \cite{Kat80,Yos80,Paz82}. Nevertheless, over the last few decades the interest in these topics (and more broadly into partial differential equations (PDEs) involving singular integrals and nonlocal operators) has exploded and great attention has been given not only to many important theoretical aspects but also to the application of these mathematical tools and formalisms to describe real-life phenomena in a variety of applications (e.g., \cite{bakunin-2008,bates-2006,gilboa_etal-2008,BuOr-etal14,CuGG18,delcastillonegrete_etal-pp-2012,mainardi-book-2010,metzler_etal-pccp-2014,Plo19,rossikhin_etal-amr-2010,zhang_etal-wrr-2016}). 

From the theoretical standpoint, the method of semigroups offers a simple (and very natural) way to define both positive and negative fractional powers of second order elliptic operators, both in unbounded and bounded domains, within the same mathematical framework. In fact, this definition relies simply on the concept of heat semigroup and on a one-dimensional singular integral involving the fractional exponent (two standard tools very well understood and often used by the mathematical community concerned with the analysis of PDEs). 

Besides providing an amenable platform for the theoretical study of important characteristics of fractional powers of operators, the numerical community has used this approach in the unbounded settings to produce monotone discretizations with optimal error estimates for the fractional Laplacian in $\mathbb{R}^n$.
The first step in this direction was done by Ciaurri et al.~\cite{Cia2015}. In particular, they consider the discrete Laplacian in $\R$ defined by 
$\Delta_h\psi(x)=\frac{\psi(x+h)+\psi(x-h)-2\psi(x)}{h^2}$ and through the analytic expression for $e^{t\Delta_h}$ (explicitly available in dimension one), they showed that, for smooth enough functions $\psi$, $\|(-\Delta_h)^s\psi-(-\Delta)^s\psi\|_{L^\infty(\R)}=O(h^{2-2s})$. Later, the numerical community showed that through the same approach much better (in fact optimal) error estimates could be obtained in any dimension. More precisely, the authors of \cite{dTEnJa18b} showed that $(-\Delta_h)^s$ produces in $\R^n$ monotone finite difference discretizations of $(-\Delta)^s$ with error estimates $O(h^2)$ independently on  the exponent $s$.

Fractional problems involving non-integer powers of elliptic operators on bounded domains like \eqref{EP}-\eqref{BC} have been extensively studied. Regularity issues have been treated for example by Seeley in \cite{See66b, See66,See69} and by several other authors in more recent works (e.g., \cite{Gru16, CaSt16, DiVo12, Vol17}). While the analysis of boundary behaviour for the solution to such problems has been investigated in~\cite{BoFiVa18}, non-homogeneous boundary conditions for these nonlocal operators have been considered in~\cite{AbDu17,antil_etal-arxiv-2017,CuTeGGPa18,lischke_etal-arxiv-2018}. 

The numerical approximation of fractional powers of elliptic operators and the solution to problems in the form of \eqref{EP}-\eqref{BC} have also received substantial attention in recent years. However, perhaps surprisingly, in spite of the successful results obtained in the unbounded context and the fact that the operator definition carries over naturally to the case of a bounded domain, the semigroup formulation was somehow overlooked from the numerical point of view in the bounded settings. This lack of results motivated us to investigate the semigroup approach further from a numerical perspective with the aim of recovering (and possibly improving) approximation estimates already available in the literature for the same type of operator while providing an alternative numerical approach to the solution of fractional elliptic problems on bounded domains.

In~\cite{CuTeGGPa18}, the authors exploit the heat semigroup and the integral definition~\eqref{eq:FLdef2} to derive numerical approximations of the spectral fractional Laplacian applied to a given function, and then use the proposed method in the context of fractional parabolic problems (see \cite{BoFiVa17} for an application of this numerical scheme in the context of fractional porous medium equations). However, the fractional elliptic problem was not considered in~\cite{CuTeGGPa18} and motivated the work presented in this manuscript. 

Elliptic problems involving fractional powers of the Laplacian defined in the spectral sense have been considered by other authors in the literature. For example, a different approach was proposed by Bonito and Pasciak in~\cite{Bon2015}. These authors also use an integral representation of $(-\Delta)^{-s}$, namely,
\begin{equation}\label{eq:invPowerBo}
(-\Delta)^{-s}f(x)=\frac{2\sin(\pi s)}{\pi} \int_{0}^\infty t^{2s-1}(I-t^2\Delta)^{-1}f(x) dt,
\end{equation}
and combine FE approximations for the solution of local elliptic problems of the form $(I-t^2 \Delta)v=f$ with suitable quadrature rules for the considered singular integral. While in the present work we need to compute the semigroup $e^{t\Delta}f$, in~\cite{Bon2015} the authors need to solve a series of elliptic problems $(I-t^2 \Delta)v=f$ for different increasing values of $t$. Nevertheless, the rates of convergence of their approach are comparable to ours when we set $k=1$ and $a=2$ in  Theorem \ref{thm:smooth1}. We mention here also the works \cite{BoPa17a,BoPa17b} in which Bonito and Pasciak extend the above-mentioned approach to a very general class of operators, as well as  the paper by Bonito et al.~\cite{BoWePa17} in which an application of their discretization method is considered in the context of parabolic equations. 

Other important numerical approaches found in the literature include the matrix transfer technique introduced by Ili\'c et al.~in~\cite{IlLiTuAn05,IlLiTuAn06} (which relies on computing an approximation to the fractional power of a matrix representing the discretization of the standard elliptic operator), the work by Song et al.~\cite{song_etal-2017} (based on an efficient method to approximate the eigenpairs of the standard Laplacian on complex geometries),  the articles by Vabishchevich \cite{Vab15,Vab16} (where the solution of (1.1)-(1.2) is viewed and computed as the value of the solution to a suitable pseudo-parabolic problem at a specific point in time),  and the works by Nochetto et al.~\cite{NoOtSa15, Chen-etal15} (relying on the use of a Dirichlet-to-Neumann map to represent the fractional operator). Regarding these last works in fact, it is well-known \cite{CaSi07, CaTa10, StTi10} that for a given $u$, the solution $v(x,y)$ to the extended problem 
\begin{equation}\label{eq:extension}
\begin{split}
\nabla\cdot(y^{1-2s}\nabla v)(x,y) &=0 \qquad \ \textup{for} \quad (x,y) \in \Omega \times (0,\infty),\\
v(x,0)&=u(x)  \quad \textup{for} \quad x \in \Omega,
\end{split}
\end{equation}
is such that $(-\Delta)^su(x)=\lim_{y\to0^+} (-y^{1-2s}v_y(x,y))$. By using this characterization and computing its numerical solution via FE, the authors of~\cite{NoOtSa15} appoximate the fractional elliptic problem \eqref{EP}-\eqref{BC} and obtain error estimates in a suitable weighted Sobolev space.  Similarly to what we do in this work with the heat semigroup $e^{t\Delta_\B}f$,  in~\cite{NoOtSa15} the fast decay of the solution of \eqref{eq:extension} in the extension variable $y\in[0,\infty)$ is exploited to obtain a convenient truncation of the extended unbounded domain.   Numerical methods for optimal control problems related to \eqref{EP}-\eqref{BC} have also been developed using characterization \eqref{eq:extension} in \cite{AnOt15} and \eqref{eq:invPowerBo} in \cite{Do-etal18}.  We refer to the very recent work by Bonito et al.~\cite{Bonito-et-al17} for a general review of numerical methods for fractional diffusion. 

Finally, it is important to mention that the spectral definition of the fractional Laplacian, that is the main focus of the present work, is not the only way to consider fractional operators in bounded domains (e.g., see \cite{MuNa14,delia_etal-cma-2013,bogdan_etal-2003,wrobel-mcs-2019}). A different (and non-equivalent \cite{MuNa14,SeVa14,GaSt18}) way of defining fractional problems in bounded domains is obtained for example by considering the so-called restricted fractional Laplacian. More precisely, given the fractional Laplacian in $\R^n$,
\begin{equation}\label{eq:FLRN}
(-\Delta)^s\psi(x)=\textup{P.V.} \int_{\R^n} \frac{\psi(x)-\psi(y)}{|x-y|^{n+2s}}dy
\end{equation}
one can define the corresponding Dirichlet problem with exterior boundary condition as $(-\Delta)^su(x)=f(x)$ for $x\in \Omega$ and $u(x)=g(x)$ for $x\in \R^n\setminus \Omega$. Numerical approximations in this case are often obtained directly from discretizations of~\eqref{eq:FLRN} either through finite-difference approximations (e.g., \cite{HO2014, HuOb16,Cia2015,dTEnJa18b}) or FE (e.g., \cite{acosta_etal-siam-2015, Ac-etal18}). We refer to the surveys by V\'azquez  \cite{Vaz12,Vaz2014,Vaz17}  for a general overview of the state-of-the-art in nonlinear and fractional diffusion, and to the work by Lischke et al.~\cite{lischke_etal-arxiv-2018} focusing in particular on the fractional Laplacian (reviewing its various definitions and comparing several approximation methods for its computation). We conclude by mentioning also numerical methods for nonlinear and nonlocal evolution problems in $\RN$ that have been treated by several authors, such as  \cite{Dro10,DrJa14,dTe14,CiJa14,dTEnJa18a}.

\noindent{\bf{Organization of the paper.}}
 The assumptions, the quadrature rules for the singular integral, the FE framework, and our main results are given in Section~\ref{sec:main}.  In Section~\ref{sec:quad}, we provide error estimates for our quadrature approximations. The FE estimates as well as the proof of our main results are given in Section~\ref{sec:FE}. Section~\ref{sec:NHBC} concerns nonhomogeneous boundary conditions, while Section \ref{sec:numtests} presents some illustrative numerical experiments supporting our results. Finally, in Section~\ref{sec:comments}, we mention possible extensions and open questions related to the presented work.

\section{Assumptions and main results}\label{sec:main}
Our numerical method is based on the integral representation of the solution to \eqref{EP}-\eqref{BC} given by 
\begin{equation*}
u(x)=(-\Delta_\B)^{-s} f(x):=\frac{1}{\Gamma(s)}\int_0^\infty e^{t\Delta_\B}f(x)\frac{d t}{t^{1-s}},
\end{equation*}
on the approximation of $e^{t\Delta_\B}f$ (solution of the homogeneous heat equation with initial condition $f$) through FE, and on the use of suitable quadrature rules for integrals of the form $\int_0^\infty \rho(t)t^{s-1}dt$.

We will formulate our main results under the following assumptions for the boundary conditions:
\begin{equation}\label{as:B}\tag{A$_{\B}$}
\textup{$\B$ as in \eqref{as:BC}. If $\B$ is Neumann, we ask $\int_\Omega f=0$.}
\end{equation}

Our first main result (Theorem \ref{thm:smooth1}) will be formulated for smooth bounded domains $\Omega \subset \R^n$, $n\geq1$, while the second one (Theorem \ref{thm:convexPol1}) concerns the case of convex polygonal domains ($n=2,3$) where rates of convergence of numerical approximations are limited by the geometry of the domain. 

 \begin{remark}
	In the Neumann case the first eigenvalue of $-\Delta_\B$ is $\lambda_1=0$. By imposing the extra condition $\int_{\Omega}f=0$ one has that $\hat{f}_1=0$ and hence there exists a unique solution of \eqref{EP}-\eqref{BC} such that $\int_{\Omega}u=0$ and $u=\sum_{m=2}^\infty\lambda_m^{-s} \hat{u}_m \varphi_m$. Throughout the manuscript we always let the index $m$ start from one with the understanding that in the Neumann case sums do not include the first term. Hence, only positive eigenvalues really play a role. 
	\end{remark}
	\begin{remark}
	More general boundary conditions could be considered as long as the corresponding operator $-\Delta_\B$ has a discrete spectrum and a symmetric, continuous and coercive bilinear form in the functional space corresponding to the FE setting.
\end{remark}

\subsection{Quadrature of the integral}\label{sec:quadrules}
Let $\rho:[0,T]\to \mathbb{R}$ be  such that $\int_0^T \rho(t)\frac{d t}{t^{1-s}}<\infty$ for some $T>0$ given. Let $\{t_j\}_{j=0}^{N_T}$ be a partition of $[0,T]$ into $N_T$ subintervals of uniform length $\varDelta t$. We define a family of quadrature rules for the integral $\frac{1}{\Gamma(s)}\int_0^T \rho(t)\frac{d t}{t^{1-s}}$ depending on the parameter $s\in(0,1)$ and an additional parameter $r$ (that will be later connected to the regularity of $\rho$) as follows: 
\begin{equation}\label{eq:Qdef}
Q^s_r[\rho]:=\frac{1}{\Gamma(s)}\sum_{j=0}^{N_T}\rho(t_j)~\beta_j,
\end{equation}
where the weights $\beta_j$ are defined as
\begin{equation}\label{eq:betas}
\beta_j:=\left\{ \begin{array}{ll}
~~~ \frac{\varDelta t^s}{s} \times \left\{\begin{array}{ll}
(j+1)^s-j^s, &\qquad \qquad \qquad \qquad j=0,\dots,N_T-1,\\
0 & \qquad \qquad \qquad \qquad j=N_T,
\end{array}\right.& \text{if } r\in[0, 2-2s],\\ 
&  
\\
\frac{\varDelta t^s}{s(1+s)}\times \left\{
\begin{array}{ll}
0& j=0,\\
2^{1+s}-1 & j=1,\\
(j+1)^{1+s}-2j^{1+s}+(j-1)^{1+s}& j=2,\dots,N_T-1,\\
(j-1)^{1+s}+(1+s)j^s-j^{1+s} & j=N_T,\\
\end{array}\right. &\text{if } r\in(2-2s, 2), 	\\		
& \\
\frac{\varDelta t^s}{s(1+s)}\times \left\{
\begin{array}{ll}
1& j=0,\\
(j+1)^{1+s}-2j^{1+s}+(j-1)^{1+s}& j=1,\dots,N_T-1,\\
(j-1)^{1+s}+(1+s)j^s-j^{1+s} & j=N_T,\\
\end{array}\right. & \text{if } r\in [2, 4-2s],   \\ 

\end{array}\right.
\end{equation}
	
\begin{remark}\label{rem:weights}The weights $\beta_j$ are all non-negative by assumption (see Lemma \ref{lem:splitinterp} below) leading to monotone quadrature rules, known to be very robust. Additionally, we point out for later use that there exist positive constants $C_1$ and $C_2$ (independent of $\varDelta t$ and $j$) such that $\beta_0\leq C_1 \varDelta t^s$  and $\beta_j\leq C_2 \int_{t_{j}}^{t_{j+1}}\frac{dt}{t^{1-s}}$ for all $j\geq 0$. 
\end{remark}
We now provide an alternative characterization of $Q^s_r[\rho]$, that will be useful in the proofs of our results. More precisely, \eqref{eq:Qdef} is the integral with respect to the measure $\frac{dt}{t^{1-s}}$ of a suitable interpolant of $\rho$.

\begin{lemma}\label{lem:splitinterp}Let $s\in(0,1)$, $r\in[0,4-2s]$, $T>0$, $\rho:[0,T] \to \R$, $\{t_j\}_{j=0}^{N_T}$ a uniform partition of $[0,T]$ with step $\varDelta t>0$, and
\begin{equation}\label{eq:interpintro}
\mI_r[\rho](t):=\left\{
\begin{array}{ll}
\begin{array}{ll} 
									\rho(t_j),&  \hspace{3.6cm}t\in[t_j,t_{j+1}), \forall j\geq 0,
									\end{array} &\text{if} ~r\in[0,2-2s], \\ 
								& \\
  \left\{ \begin{array}{ll}
									\rho(t_1), &t\in[t_0, t_1),\\
									\frac{t_{j+1}-t}{\varDelta t}\rho(t_j)+\frac{t-t_j}{\varDelta t}\rho(t_{j+1}),& t\in[t_j,t_{j+1}), \forall j \geq 1,\\
									\end{array} \right. & \text{if}~r\in(2-2s,2),\\
								& \\
 \begin{array}{ll} 
										\frac{t_{j+1}-t}{\varDelta t}\rho(t_j)+\frac{t-t_j}{\varDelta t}\rho(t_{j+1}),& \hspace{0.3cm}t\in[t_j,t_{j+1}), \forall j\geq 0, \end{array} & \text{if} ~r\in[2, 4-2s]\\
\end{array}\right.
\end{equation}
Then, 
\begin{equation*}
Q^s_r[\rho]=\frac{1}{\Gamma(s)}\int_0^T \mI_r[\rho](t) \frac{dt}{t^{1-s}}.
\end{equation*}
\end{lemma}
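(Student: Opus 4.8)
The plan is to verify the three regimes separately by writing the interpolant as $\mI_r[\rho](t)=\sum_{j}\rho(t_j)\,\phi_j(t)$ for suitable non-negative basis functions $\phi_j$ (indicator functions of the subintervals in the piecewise-constant case, and the usual hat functions, modified near $t=0$, in the two piecewise-linear cases), integrating against the measure $t^{s-1}\,dt$, and matching the coefficient of each $\rho(t_j)$ with the prescribed weight $\beta_j$. The only analytic inputs needed are the two elementary integrals
\begin{equation*}
\int_{t_j}^{t_{j+1}}t^{s-1}\,dt=\frac{\varDelta t^s}{s}\big((j+1)^s-j^s\big),\qquad \int_{t_j}^{t_{j+1}}t^{s}\,dt=\frac{\varDelta t^{s+1}}{s+1}\big((j+1)^{s+1}-j^{s+1}\big),
\end{equation*}
which follow from $t_j=j\,\varDelta t$.

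For $r\in[0,2-2s]$ the interpolant is piecewise constant, so $\int_0^T\mI_r[\rho]\,t^{s-1}\,dt=\sum_{j=0}^{N_T-1}\rho(t_j)\int_{t_j}^{t_{j+1}}t^{s-1}\,dt$, and the first elementary integral delivers $\beta_j$ directly (with $\beta_{N_T}=0$, since $t_{N_T}$ carries no interval to its right). For the genuinely piecewise-linear case $r\in[2,4-2s]$ I would set $A_j:=\int_{t_j}^{t_{j+1}}\frac{t_{j+1}-t}{\varDelta t}t^{s-1}\,dt$ and $B_j:=\int_{t_j}^{t_{j+1}}\frac{t-t_j}{\varDelta t}t^{s-1}\,dt$, evaluate both via the two integrals above, and collect coefficients: the interior node $t_j$ ($1\le j\le N_T-1$) receives $A_j+B_{j-1}$, the left endpoint receives $A_0$, and the right endpoint receives $B_{N_T-1}$. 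A short computation gives $A_0=\frac{\varDelta t^s}{s(1+s)}$ and, after simplification, $B_{N_T-1}=\frac{\varDelta t^s}{s(1+s)}\big((N_T-1)^{1+s}+(1+s)N_T^s-N_T^{1+s}\big)$, which are exactly the boundary weights.

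The crux is the interior identity. Splitting $A_j+B_{j-1}$ into its $\tfrac1s$ and $\tfrac1{1+s}$ parts, the $\tfrac1s$ contributions combine to $\frac1s\big((j+1)^{1+s}+(j-1)^{1+s}-2j^{1+s}\big)$ while the $\tfrac1{1+s}$ contributions combine to $-\frac{1}{1+s}\big((j+1)^{1+s}+(j-1)^{1+s}-2j^{1+s}\big)$; since $\frac1s-\frac1{1+s}=\frac{1}{s(1+s)}$, the two collapse into the second-difference weight $\frac{\varDelta t^s}{s(1+s)}\big((j+1)^{1+s}-2j^{1+s}+(j-1)^{1+s}\big)$. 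This telescoping of the linear and constant moments into a clean second difference is the one step requiring care. The intermediate regime $r\in(2-2s,2)$ is then identical for $j\ge 2$ and at the right endpoint; the only changes are that $\rho(t_0)$ drops out (so $\beta_0=0$) and the constant piece $\rho(t_1)$ on $[t_0,t_1)$ adds $\int_{t_0}^{t_1}t^{s-1}\,dt=\frac{\varDelta t^s}{s}$ to the coefficient of $\rho(t_1)$; combining this with $A_1$ and simplifying yields precisely $\frac{\varDelta t^s}{s(1+s)}(2^{1+s}-1)$. Finally, this characterization makes the non-negativity asserted in Remark~\ref{rem:weights} transparent, since $\beta_j=\int_0^T\phi_j(t)\,t^{s-1}\,dt$ with $\phi_j\ge 0$.
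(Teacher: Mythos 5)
Your proposal is correct and follows essentially the same route as the paper: the paper's proof also writes $\mI_r[\rho](t)=\sum_{j}\rho(t_j)\mathcal{P}_j(t)$ with non-negative nodal basis functions (indicators, hats, and the merged hat $P^1_0+P^1_1$ in the intermediate regime) and verifies $\int_0^T\mathcal{P}_j(t)\,\frac{dt}{t^{1-s}}=\beta_j$ by explicit computation. The only difference is that the paper leaves these integrals as ``simple calculations'' (citing the strategy of \cite{HO2014}), whereas you carry them out, correctly obtaining the boundary weights and the second-difference collapse $\frac1s-\frac1{1+s}=\frac{1}{s(1+s)}$ at the interior nodes.
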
	

\begin{proof}[Proof of Lemma \ref{lem:splitinterp}]
	For all $t\in[0,T]$ and all $j\geq 0$, let $P^0_j(t):=\mathbb{I}_{[t_j,t_{j+1})}$, that is, the indicator function of the interval $[t_j,t_{j+1})$, and let
	$
	P^1_j(t):=\left(1-\frac{|t-t_j|}{\varDelta t}\right) \mathbb{I}_{[ t_{j-1},t_{j+1}]}.
	$
	The family of interpolants defined in \eqref{eq:interpintro} can then be written as
	$
	\mI_r[\rho](t)= \sum_{j=0}^{N_T}  \rho(t_j) \mathcal{P}_j(t)
	$
	where the functions $\mathcal{P}_j$ are defined as follows:
	\begin{equation}\label{eq:mathcalP}
	\mathcal{P}_j(t):=\left\{
	\begin{array}{ll}
	\begin{array}{ll}
	P^0_j(t),
	& \hspace{1.7cm} j=0,1,\dots,N_T,
	\end{array} &\text{if}~ r\in[0,2-2s],  \\ 
	& \\
	 \left\{	\begin{array}{ll}
	0,
	& j=0,\\
	P^1_0(t)+P^1_1(t),
	& j=1,\\ 
	P^1_j(t),
	& j=2,\dots,N_T,
	\end{array} \right. & \text{if}~r\in(2-2s,2),	\\
	& \\
	 \begin{array}{ll}
	P^1_j(t),
	&\hspace{1.7cm} j=0,1,\dots,N_T,
	\end{array} & \text{if}~	r\in[2, 4-2s].\\
	\end{array}\right.
	\end{equation}
Hence, 
	$	\frac{1}{\Gamma(s)}\int_0^T \mI_r[\rho](t) \frac{dt}{t^{1-s}}=\frac{1}{\Gamma(s)}\sum_{j=0}^{N_T} \rho(t_j) \int_0^T \mathcal{P}_j (t) \frac{dt}{t^{1-s}}
	$.
	The integrals $\int_0^T \mathcal{P}_j(t)\frac{dt}{t^{1-s}}$ can be computed explicitly (following for example the strategy in \cite{HO2014})  and with simple calculations one finds that
	$\int_0^T \mathcal{P}_j(t)\frac{dt}{t^{1-s}}=\beta_j$, with $\beta_j$ defined as in \eqref{eq:betas}, thus concluding the proof.
\end{proof}

\begin{remark}
	Note that when $r\in[0,2-2s]$, $\mI_r$ corresponds to the piecewise constant Lagrange interpolant. Similarly, for $r\in[2,4-2s]$, $\mI_r$ is the piecewise linear Lagrange interpolant on $[0,T]$. However, when $r\in (2-2s,2)$, we consider a ``special'' definition of $\mI_r$ at the origin, combining piecewise linear and piecewise constant interpolation, that was chosen in order to obtain optimal convergence results (see Section~\ref{sec:quad}). The choice of $\mI_r[\rho](t)=\rho(t_1)$ for $t\in[t_0,t_1)$ when $r\in(2-2s,2)$ is not the only possible option. In fact, exactly the same error estimates could also be proved by setting $\mI_r[\rho](t)=\rho(t_0)$ for this choice of $t$ and $r$, provided that the corresponding $\mathcal{P}_j(t)$ in \ref{eq:mathcalP} and $\beta_j$ in \ref{eq:betas} are suitably adapted.
\end{remark}

A first approximation of $(-\Delta_\B)^{-s}f(x)$ given by \eqref{eq:IFL} is defined for every $x\in \Omega$ by
\begin{equation}\label{eq:quadratureSolHE2}
	Q^s_r[e^{\cdot \Delta_\B }f(x)]=\frac{1}{\Gamma(s)}\sum_{j=0}^{N_T}e^{t_j\Delta_\B}f(x)~\beta_j.
\end{equation}
As it will be made clear in Section~\ref{sec:mainresu}, the parameter $r$ in the definition of the quadrature weights refers to the regularity of the datum $f$. Note also that for practical reasons we need to truncate the singular integral in \eqref{eq:IFL} at a finite time $T$. However, we will be able to control the remainder using the exponential decay  $e^{t \Delta_\B}f(x)$ as $t\to\infty$.

In general, computation of $e^{t \Delta_\B}f(x)$ cannot be carried out exactly and often (for general domains) an explicit expression for $e^{t \Delta_\B}f(x)$ is not even known. Therefore, suitable numerical approximations have to be introduced and in this manuscript we do so via the FE method (as outlined in Section~\ref{sec:finElem}).
\begin{remark}
Whenever an explicit expression for $e^{t \Delta_\B}f(x)$ is available then \eqref{eq:quadratureSolHE2} provides a suitable numerical approximation with an error estimate depending only on $\Delta t$ (and not on any spatial discretization parameter), as shown by Theorem~\ref{thm:interp}.
\end{remark}

\subsection{Finite elements setting}\label{sec:finElem}
We now introduce some general concepts of the FE theory and the notation we will use to denote the FE approximations in the rest of the manuscript. For a detailed discussion we refer to any classical book on the topic (such as~\cite{Vid97,QV2008}). The weak formulation of \eqref{eq:HeatEq} is given by
\begin{equation}
\label{eq:general}
\left\{
\begin{array}{l}
\dfrac{d}{dt} \langle w(t),v\rangle+a( w(t),v)=0, \quad \forall v \in V, t\in(0,\infty)\\
w(0)=f,
\end{array}
\right.
\end{equation}
where $\langle\cdot,\cdot\rangle$ denotes the $L^2(\Omega)$ inner product, while $a(\cdot,\cdot)$, and $V$ are respectively the bilinear form and the functional space associated to the differential operator $-\Delta$ with boundary conditions $\B$. For homogeneous Dirichlet boundary conditions, $a(w,v)=\int_{\Omega}\nabla w \cdot \nabla v $ and $V=H^1_0(\Omega)$, for Neumann $a(w,v)=\int_{\Omega}\nabla w \cdot \nabla v $ and $V=H^1(\Omega)$, while for Robin $a(w,v)=\int_{\Omega}\nabla w \cdot \nabla v+\int_{\partial \Omega}\kappa w ~v$ and $V=H^1(\Omega)$. The FE approximation of the weak solution $w(x,t)$ is typically obtained by first discretizing the above problem in space and then introducing a suitable temporal discretization scheme. 

Given a bounded domain $\Omega$ and a spatial discretization parameter $h>0$, let $\mathcal{T}_h$ denote the associated domain triangulation. We will always assume that $\mathcal{T}_h$ is a quasi-uniform family of triangulations which is essentially requiring that all elements in the triangulation are of about the same size. 

Let $\{V_h\}$ be a family of finite-dimensional subspaces of $L^2(\Omega)$ and let $\{E_h\}$ be a family of operators $E_h:L^2\to V_h$ approximating the exact solution operator $E$ of the elliptic equation $-\Delta \psi = g$ with boundary conditions $\B(\psi)=0$ (i.e., $Eg=\psi $). We assume that: 
\begin{description}
	\item[i)] $E_h$ is selfadjoint, positive semi-definite on $L^2$, and positive definite on $V_h$;
	\item[ii)] there exists a positive integer $k\geq 1$ such that for any $p$, $2\leq p \leq k+1$,
	\[
	\| (E_h-E)g\|_{L^2} \leq C h^p \|g\|_{H^{p-2}} \quad \text{when } g\in H^{p-2}.
	\]
\end{description} 
Then, the value $k+1$ is known as the order of accuracy of the FE approximation (in space). We denote by $\{\phi_i^k\}_{i=1}^{N_h}$ a basis of $V_h$ so that $N_h$ corresponds to the finite dimension of the space. On polygonal domains, an example of $\{V_h\}$ is given by the set of continuous, piecewise polynomial functions of degree at most $k$ on each element of $\mathcal{T}_h$, while on smooth domains suitable FE modifications are considered to avoid the introduction of extra approximation errors near the boundary.

Let $\varDelta t>0$ be a time step and $t_j=j\varDelta t$ be a uniform grid of time points ($j$ non-negative integer). We denote by $w_{k,\tau}$ the FE solution of~\eqref{eq:general} defined as follows:
\begin{equation*}
\label{eq:FEsol}
w_{k,\tau}(x,t_j)=\sum_{i=1}^{N_h}W_i^{(j)}\phi_i^k(x), \quad \text{for all}\quad x\in \Omega,
\end{equation*}
where $W_i^{(j)}$ is the $i$-th component of the vector $\mathbf{W}^{(j)}$, solution of the iterative linear system
\begin{equation*}
\label{eq:fullydisc}
\begin{array}{l}
(M+\theta \varDelta t A)\mathbf{W}^{(j)}=(M+(\theta-1)\varDelta t A)\mathbf{W}^{(j-1)}\quad \text{for}\quad j\geq 1.
\end{array}
\end{equation*}
Here, $\theta\in[0,1]$ is a parameter used in the temporal discretization scheme, $M$ and $A$ denote the FE mass and stiffness matrices (with entries $M_{i,j}:=\langle \phi_i^k,\phi_j^k \rangle$ and $A_{i,j}:=a(\phi_i^k,\phi_j^k)$, respectively) and the initial condition $\mathbf{W}^{(0)}$ is the vector of coefficients given by the $L^2$-orthogonal projection of the initial condition~$f$ over the FE finite-dimensional space $V_h$.

While the subscript $k$ in the notation $w_{k,\tau}$ is related to the accuracy of the solution in space (FE with polynomials of degree $k$ give an approximation of order $k+1$ in space), the subscript $\tau$ denotes the order of the FE approximation in time. It is known that when $\theta\not=1/2$, we have that $\tau=1$ while for $\theta=1/2$ (Crank-Nicolson scheme), we get $\tau=2$ (see e.g., \cite{iserles-book-2008}). Precise error bounds depending on the FE order in both space and time will be given in Section~\ref{sec:FE}.

Finally, we require the following stability condition:
\begin{equation}\label{as:CFL}\tag{CFL}
\textup{if $\textstyle 0\leq \theta <\frac{1}{2}$, then $\varDelta t\leq Ch^2$ for some $C>0$, depending on $\theta$ and $a(\cdot,\cdot)$}.
\end{equation}
We refer to Chapter 6 in reference~\cite{QV2008} for precise details on this condition and the explicit form of $C$.

\subsection{Main results}
\label{sec:mainresu}
To derive a fully discrete approximation of $(-\Delta_\B)^{-s}f$ for some $f\in \HH^r(\Omega)$, $r\in[0,4-2s]$, we combine the quadrature rule $Q^s_r$ defined in Section \ref{sec:quadrules} with the FE approximation $w_{k,\tau}$ (for some $k\in \N_+$ and $\tau\in\{1,2\}$) to the solution of the heat equation \eqref{eq:HeatEq} with initial condition $f$ presented in Section \ref{sec:finElem}. That is, for all $x\in \Omega$, we define the approximation of $(-\Delta_\B)^{-s}f(x)$ as follows:
\begin{equation*}
Q^s_r[w_{k,\tau}](x) =\frac{1}{\Gamma(s)} \sum_{j=0}^{N_T} w_{k,\tau}(x,t_j) ~ \beta_j.
\end{equation*}

We now present our main results. The first theorem is formulated on smooth bounded domains.

\begin{theorem}\label{thm:smooth1} Let $k\in \N_+$, $n\geq1$, and $\Omega\subset \R^n$ be a bounded domain of class $C^{k+1}$.
Assume \eqref{as:B}, $\kappa\in C^{k}(\Omega)$ in the Robin case, \eqref{as:CFL}, $s\in(0,1)$ and $h,\varDelta t>0$ such that $\varDelta t\sim h^a$ for some $a\in(0,2]$. Assume that 
	\begin{enumerate}
		\item[] {\bf (Regularity)} $f\in \HH^r(\Omega)$ for some $r\in[0,4-2s]$.
		\item[] {\bf(Finite elements)} $w_{k,\tau}$ is the FE solution of \eqref{eq:HeatEq} with $\tau$ chosen as follows:
		\begin{description}
			\item[a)] if $r\in[0,2-2s)$, then either $\tau=1$ or $\tau=2$;
			\item[b)] if $r\in[2-2s,4-2s)$, then $\tau=2$.
			\end{description}
		\item[] {\bf{(Quadrature)}} $Q^s_r$ is given by \eqref{eq:Qdef} with $T> \frac{1}{\lambda_{\min}}\log \left(\frac{1}{\varDelta t^{\frac{r}{2}+s}}\right)$ and $\lambda_{\min}$ the first positive eigenvalue of $-\Delta_\B$ on $\Omega$.
	\end{enumerate}
	Then, there exists a constant $C>0$ such that
	\begin{equation*}\label{eq:smoothapprox}
	\|Q^s_r[w_{k,\tau}]-(-\Delta_\B)^{-s}f\|_{L^2(\Omega)}\leq C_{r,h} ~h^{\min\{k+1,a(\frac{r}{2}+s)\}}~ \|f\|_{\HH^r(\Omega)},
	\end{equation*}
	with
		\[
		C_{r,h}:=\left\{ 
		\begin{array}{ll}
		C& r\notin \{k+1-2s,4-2s\}\\
		C~\log(h^{-a})& r\in \{k+1-2s,4-2s\}.
		\end{array}
		\right.
		\]
	
\end{theorem}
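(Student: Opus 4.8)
The plan is to bound the total error by the triangle inequality into a purely temporal part (quadrature plus truncation of the tail) and a purely spatial finite-element part, using the linearity of $Q^s_r$:
\begin{equation*}
\|Q^s_r[w_{k,\tau}]-\DBm f\|_{L^2(\Omega)} \leq \|Q^s_r[w_{k,\tau}-e^{\cdot\Delta_\B}f]\|_{L^2(\Omega)} + \|Q^s_r[e^{\cdot\Delta_\B}f]-\DBm f\|_{L^2(\Omega)}.
\end{equation*}
The second term contains no $h$ and is exactly the quantity analysed by Theorem~\ref{thm:interp}: for $f\in\HH^r(\Omega)$ it is of order $\varDelta t^{r/2+s}$ (with the logarithmic correction appearing at the endpoint $r=4-2s$), which becomes $h^{a(r/2+s)}$ once $\varDelta t\sim h^a$ is inserted. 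The truncation of the tail hidden in this term I would control via the exponential decay $\|e^{t\Delta_\B}f\|_{L^2(\Omega)}\leq e^{-\lambda_{\min}t}\|f\|_{L^2(\Omega)}$ (immediate from the spectral expansion and $\lambda_m\geq\lambda_{\min}$), so that $\frac{1}{\Gamma(s)}\int_T^\infty e^{t\Delta_\B}f\,\frac{dt}{t^{1-s}}$ is bounded by $Ce^{-\lambda_{\min}T}T^{s-1}\|f\|_{L^2(\Omega)}$; the prescribed choice $T>\frac{1}{\lambda_{\min}}\log(\varDelta t^{-(r/2+s)})$ forces $e^{-\lambda_{\min}T}\leq\varDelta t^{r/2+s}$, making the tail of the same order as the quadrature error.

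The core of the proof is the finite-element term. Since the weights $\beta_j$ are non-negative (Lemma~\ref{lem:splitinterp}), I would pass the $L^2$-norm through the sum to get
\begin{equation*}
\|Q^s_r[w_{k,\tau}-e^{\cdot\Delta_\B}f]\|_{L^2(\Omega)}\leq\frac{1}{\Gamma(s)}\sum_{j=0}^{N_T}\|w_{k,\tau}(\cdot,t_j)-e^{t_j\Delta_\B}f\|_{L^2(\Omega)}\,\beta_j,
\end{equation*}
and then insert the fully-discrete heat error estimate from Section~\ref{sec:FE}, which for $f\in\HH^r(\Omega)$ behaves like $C\bigl(h^{k+1}t_j^{-(k+1-r)/2}+\varDelta t^{\tau}\bigr)\|f\|_{\HH^r(\Omega)}$ for $j\geq1$, while at $t_0=0$ it reduces to the $L^2$-projection error $O(h^{\min\{k+1,r\}})\|f\|_{\HH^r(\Omega)}$. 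Using $\beta_j\leq C\int_{t_j}^{t_{j+1}}t^{s-1}\,dt$ from Remark~\ref{rem:weights}, the sum over $j\geq1$ is comparable to $h^{k+1}\int_{t_1}^{T}t^{\,\gamma}\,dt$ with $\gamma=s-1-(k+1-r)/2$, while the temporal part contributes $h^{a\tau}$; the requirement $a\tau\geq a(r/2+s)$, i.e. $\tau\geq r/2+s$, is precisely what forces $\tau=2$ once $r\geq2-2s$ and permits $\tau=1$ below, matching conditions a) and b).

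The main obstacle is the behaviour of that integral near the origin. When $r>k+1-2s$ one has $\gamma>-1$, the integral converges to a constant and the contribution is $h^{k+1}$; at the critical value $r=k+1-2s$ one gets $\gamma=-1$, producing the factor $\log(T/t_1)\sim\log(h^{-a})$ and hence $h^{k+1}\log(h^{-a})$; and when $r<k+1-2s$ the exponent $\gamma<-1$ makes the integral divergent, so that it is dominated by its lower endpoint $t_1\sim\varDelta t\sim h^a$, giving
\begin{equation*}
h^{k+1}(\varDelta t)^{\gamma+1}\sim h^{\,k+1+a(s-(k+1-r)/2)}=h^{\,(k+1)(1-a/2)+a(r/2+s)}\leq h^{\,a(r/2+s)},
\end{equation*}
where the last inequality uses $a\leq2$. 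The isolated $j=0$ term is handled with $\beta_0\leq C_1\varDelta t^s\sim h^{as}$, giving $h^{\min\{k+1,r\}+as}$, and again $a\leq2$ ensures $\min\{k+1,r\}+as\geq\min\{k+1,a(r/2+s)\}$. Collecting the three regimes, the finite-element contribution is $O\bigl(h^{\min\{k+1,a(r/2+s)\}}\bigr)$ up to the stated logarithm at $r=k+1-2s$; combined with the quadrature/truncation estimate of the first paragraph (log at $r=4-2s$), the triangle inequality yields the claim with the announced constant $C_{r,h}$. The delicate point throughout is coupling the spatial and temporal discretizations through $\varDelta t\sim h^a$ with $a\leq2$, which is exactly what balances the time-singularity of the non-smooth-data finite-element estimate against the singular measure $t^{s-1}\,dt$ and produces the final rate $\min\{k+1,a(r/2+s)\}$.
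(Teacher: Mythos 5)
Your overall architecture is the paper's: the same triangle-inequality split into the quadrature/truncation error (handled by Theorem~\ref{thm:interp}) and the FE term $\|Q^s_r[w_{k,\tau}-e^{\cdot\Delta_\B}f]\|_0$, the same use of the weight bounds from Remark~\ref{rem:weights}, and the same isolation of the $j=0$ projection term. However, there is a genuine gap in how you quote the fully discrete heat estimate. For data merely in $\HH^r(\Omega)$, the temporal part of \eqref{eq:errorFEVidar} is $C_2\,\varDelta t^{\llll}\, t_j^{-(\llll-\tilde p)}\|f\|_{2\tilde p}$ with $2\tilde p\leq r$ forced by the available regularity; with $\tilde p=r/2$ and $\llll=\tau$ this reads $\varDelta t^{\tau} t_j^{-(\tau-r/2)}\|f\|_r$, \emph{not} the uniform-in-time $\varDelta t^{\tau}\|f\|_{\HH^r}$ that you assert (the uniform bound would require $f\in\HH^{2\tau}$, i.e.\ $r\geq 2\tau$, which fails throughout the range of interest). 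Consequently your claim that the temporal discretization "contributes $h^{a\tau}$" is unjustified. When the singular factor is restored, the temporal contribution against the measure $t^{s-1}dt$ on $[\varDelta t,1]$ is $\varDelta t^{\tau}\int_{\varDelta t}^1 t^{s-1-(\tau-r/2)}dt\sim \varDelta t^{r/2+s}$ when $\tau>r/2+s$, and $\varDelta t^{r/2+s}\log(\varDelta t^{-1})$ when $\tau=r/2+s$ --- which is exactly where the endpoint logarithm at $r=4-2s$ (where $r/2+s=2=\tau$) actually comes from. You instead misattribute that logarithm to Theorem~\ref{thm:interp}, which in the paper is log-free over the whole range $r\in[0,4-2s]$; your two errors happen to compensate at $r=4-2s$, but the argument as written is incorrect, and the paper avoids the issue by tuning the parameters $(p,\pp,l,\llll)$ in \eqref{eq:errorFEVidar} differently in each regime (e.g.\ $l=r+2s+\epsilon$, $\llll=r/2+s+\epsilon/2$ in the generic case) rather than fixing $l=k+1$, $\llll=\tau$ as you do.

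A second, more minor defect: you integrate the spatial error over the whole window $[t_1,T]$, whereas $T\sim\frac{1}{\lambda_{\min}}\log(\varDelta t^{-(r/2+s)})$ grows as $h\to0$. In the regime where your exponent $\gamma=s-1-(k+1-r)/2$ lies in $(-1,0]$ (e.g.\ $k=1$, $r\in(2-2s,4-2s]$), your bound $h^{k+1}\int_{t_1}^T t^{\gamma}dt\sim h^{k+1}T^{\gamma+1}$ carries a spurious factor $(\log h^{-a})^{\gamma+1}$ that the theorem does not allow outside the critical values $r\in\{k+1-2s,4-2s\}$. The paper removes this by splitting the sum at $t_j=1$ ($j=N_1$) and re-invoking \eqref{eq:errorFEVidar} with $p=\pp=0$, $l=k+1$, $\llll=\tau$ for $t_j\geq1$, so that the tail of the sum is controlled by the convergent integral $\int_1^\infty t^{s-2}dt$ with no dependence on $T$. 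Both gaps are repairable within your framework --- restore the $t_j^{-(\tau-r/2)}$ factor and split at $t=1$ --- and your treatment of the spatial term with fixed $l=k+1$ (letting the divergent integral be absorbed via $a\leq2$, rather than the paper's $\epsilon$-shifted exponents) is a legitimate variant; but as written the proof does not go through.
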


We stress that the bound $h^{k+1}$ in the accuracy of the method is inherited from the order of convergence of the FE approximation, while $h^{a(r/2+s)}$ comes from our quadrature.

Our second result concerns convex polygonal domains in dimension two and three. We note that in the one-dimensional case, every convex domain is of class $C^\infty$ and thus the previous result holds.

\begin{theorem}\label{thm:convexPol1}
Let $n\in\{2,3\}$, $\Omega$ be a convex polytope in $\R^n$, and $k=1$. Then the result of Theorem~\ref{thm:smooth1} still holds.
\end{theorem}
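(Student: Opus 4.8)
The plan is to reduce the statement to a single missing ingredient---the validity of the abstract finite-element hypotheses (i)--(ii) on convex polytopes---and then to observe that every remaining step in the proof of Theorem~\ref{thm:smooth1} is insensitive to the smoothness of $\partial\Omega$. Splitting the total error through the triangle inequality as
\[
Q^s_r[w_{1,\tau}]-\DBm f = \big(Q^s_r[w_{1,\tau}]-Q^s_r[e^{\cdot\Delta_\B}f]\big) + \big(Q^s_r[e^{\cdot\Delta_\B}f]-\DBm f\big),
\]
the second bracket collects the quadrature error together with the truncation of the tail $\int_T^\infty$. Since $Q^s_r$ acts only on the time variable, and both $e^{t\Delta_\B}f$ and $\DBm f$ are expressed purely through the spectral data $\{\varphi_m,\lambda_m\}$ via \eqref{eq:Wseries} and \eqref{eq:IFLspec}, this bracket is bounded exactly as in the smooth case by the quadrature analysis of Section~\ref{sec:quad} (in particular Theorem~\ref{thm:interp}): the estimates depend on $f$ only through $\|f\|_{\HH^r(\Omega)}$ and on the spectral gap $\lambda_{\min}$, and the tail truncation uses only the exponential decay $e^{-\lambda_{\min}t}$. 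None of this requires any regularity of $\partial\Omega$ beyond discreteness of the spectrum, so this contribution is $O(\varDelta t^{r/2+s})\sim h^{a(r/2+s)}$ verbatim.

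The only genuinely domain-dependent contribution is therefore the first bracket, which measures how well the FE heat solution $w_{1,\tau}$ approximates $e^{t\Delta_\B}f$ at the quadrature nodes. Using the non-negativity of the weights and the bounds $\beta_0\le C_1\varDelta t^s$, $\beta_j\le C_2\int_{t_j}^{t_{j+1}}\frac{dt}{t^{1-s}}$ recorded in Remark~\ref{rem:weights}, this term is controlled by the space-time FE error for the heat equation \eqref{eq:HeatEq}, which in turn is built entirely upon the two abstract hypotheses (i)--(ii) imposed on $E_h$ in Section~\ref{sec:finElem}. Consequently it suffices to verify that on a convex polytope $\Omega\subset\R^n$, $n\in\{2,3\}$, the continuous piecewise-linear space $V_h$ (i.e.\ $k=1$) satisfies (i) and (ii) with $p=2$; once this is granted, the heat-equation error propagation and the subsequent monotone summation against the $\beta_j$ proceed identically to Theorem~\ref{thm:smooth1}, and the FE part is capped at $h^{k+1}=h^2$ precisely as in the statement.

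To check (i), I would note that self-adjointness and positive semi-definiteness of $E_h$ hold for any conforming Galerkin discretization of the symmetric coercive form $a(\cdot,\cdot)$, independently of the geometry. The crux is (ii) with $p=2$, namely $\|(E_h-E)g\|_{L^2}\le Ch^2\|g\|_{L^2}$. The key point is that convex polytopes, although merely Lipschitz, still enjoy full $H^2$ elliptic regularity: for each boundary condition in \eqref{as:BC} the solution of $-\Delta\psi=g$, $\B(\psi)=0$, satisfies $\|\psi\|_{H^2(\Omega)}\le C\|g\|_{L^2(\Omega)}$ on convex domains (the classical theory of Grisvard for corner and edge domains). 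Combining this shift estimate with C\'ea's lemma and the linear interpolation bound yields $\|(E_h-E)g\|_{H^1}\le Ch\|g\|_{L^2}$, and an Aubin--Nitsche duality argument---again invoking $H^2$-regularity, now for the dual problem---upgrades it to the desired $O(h^2)$ bound in $L^2$. This is exactly where the restriction $k=1$ enters: convex corners and edges generically obstruct $H^3$-regularity, so higher-order interpolation cannot be exploited and (ii) is available only up to $p=2$.

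The main obstacle is thus the regularity input: one must invoke, and match to each of the Dirichlet, Neumann, and Robin settings, the sharp $H^2$-regularity valid on convex polytopes in dimensions two and three---including the edge behaviour in $\R^3$---and confirm that the boundary term $\int_{\partial\Omega}\kappa\,w\,v$ (a lower-order perturbation, with $\kappa$ of the assumed smoothness) does not degrade the shift estimate. Everything downstream---the duality argument, the heat-equation error, and the summation against the weights $\beta_j$---is then word-for-word identical to the smooth case, so no new quadrature or truncation analysis is required.
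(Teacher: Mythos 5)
Your proposal is correct and follows essentially the same route as the paper: the quadrature and tail-truncation analysis of Theorem~\ref{thm:interp} is insensitive to boundary smoothness, so the entire burden falls on the validity of the parabolic FE estimate \eqref{eq:errorFEVidar} with $k=1$ on convex polytopes, which the paper asserts in Proposition~\ref{thm:FE3} (citing the limited elliptic regularity discussion around~\cite{grisvard_book-1985}) before concluding via the identical triangle-inequality splitting. The only difference is that you unpack what the paper leaves to citation, deriving hypothesis (ii) with $p=2$ from the $H^2$ shift estimate on convex domains via C\'ea's lemma and Aubin--Nitsche, and correctly identifying the failure of $H^3$ regularity as the reason for the restriction to $k=1$.
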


The proofs of Theorem~\ref{thm:smooth1} and Theorem~\ref{thm:convexPol1} are the main focus of the paper and will be a consequence of the results presented in Sections \ref{sec:quad} and \ref{sec:FE}. 

\subsection{Some comments on the computational cost of our approach}
\label{sec:computcost}
The computational cost of our algorithm to produce the numerical approximation of $\DBm f$ heavily relies on the number of points $N_T$ needed by the quadrature $Q_r^s$ which depends on $\varDelta t$ as follows: 
\[
N_T>\frac{\frac{r}{2}+s}{\lambda_{\min}}~ \varDelta t^{-1} \log(\varDelta t^{-1}).
\] 
The time step $\varDelta t$ is in turn chosen proportional to a suitable power of $h$, which was introduced to reduce the overall computational cost of our method, whenever possible.

When $0\leq \theta<\frac{1}{2}$, the \eqref{as:CFL} condition forces us to select $\varDelta t\sim h^2$, while for $\theta\geq \frac{1}{2}$ this restriction is not imposed. Nevertheless, for $r\in[0,2-2s)$, even if we select $\theta=\frac{1}{2}$ (and hence $\tau=2$) the best error decay is still obtained with $a=2$ (i.e., $\varDelta t\sim h^2$) since $r+2s<2$ and $k+1\geq 2$ for all $k\in\mathbb{N}_+$.

On the other hand, if $r\in(2-2s,4-2s]$ and linear FE are used ($k=1$), the error decay of our approximation is always bounded by $h^2$. This allows us to select an ``optimal'' value of $a$, namely, $a_{\rm opt}:=\frac{2}{r/2+s}\leq 2$, to reduce computational costs while preserving quadratic convergence. Note that, for $s$ fixed, $a_{\rm opt}$ decreases as $r$ increases. Therefore, as the regularity of the right-hand side of the fractional elliptic problem improves, the solution approximation can be obtained with lower computational effort (i.e., a larger $\varDelta t$ can be selected). Similarly, for $r$ fixed, $a_{\rm opt}$ decreases as the fractional parameter $s$ increases (that is, the closer \eqref{EP}-\eqref{BC} is to the local elliptic problem). 

An example worth mentioning is the case $r=4-2s$ for which $a_{\rm opt}=1$, meaning that the choice $\varDelta t\sim h$ already gives quadratic order of convergence. This and other interesting examples are presented in Table~\ref{tab:decay} below. We refer to Section~\ref{sec:numtests} and Table~\ref{tab:cost} for a simple numerical test showing evidence on the reduction of computational cost in one of these cases. Analogous considerations could be made for general $k\in \N_+$.

 \begin{table}[h!]
 	\centering
 		\begin{tabular}{|c||c|c|c|c|c|c|c|c|c|}\hline
 			$r$ & \multicolumn{2}{c|}{0}& \multicolumn{2}{c|}{$2-2s$}& \multicolumn{3}{c|}{$3-2s$}&\multicolumn{2}{c|}{$4-2s$\rule{0pt}{2.5ex}} \\ \hline
 			$k$& \multicolumn{2}{c|}{1}& \multicolumn{2}{c|}{1} &\multicolumn{2}{c|}{1}&2 &1&3\rule{0pt}{2.5ex} \\ \hline
 			$\tau$& \multicolumn{2}{c|}{1}& \multicolumn{2}{c|}{2} & \multicolumn{3}{c|}{2} & \multicolumn{2}{c|}{2\rule{0pt}{2.5ex}}  \\  \hline
 			$a$& 1& 2& 1 & 2 &1& 4/3& 2 &1 & 2\rule{0pt}{2.5ex} \\  \hline\hline
 			EDh& $h^s$ & $h^{2s}$ & $h$ & $h^2$ &$h^{3/2}$ &$h^2$ & $h^3$& $h^2$ & $h^4$\rule{0pt}{3ex} \\ \hline
 		\end{tabular}
 	\caption{Error dependence on $h$ (EDh) as stated in Theorem \ref{thm:smooth1} (logarithmic term omitted), for data $f\in\HH^r(\Omega)$ with different combinations of the parameters $r$, $k$, $\tau$, and $a$.}
 	\label{tab:decay}
 \end{table}

\section{Quadrature estimates}\label{sec:quad}
For every $x\in \Omega$ we apply the family of quadratures $Q_r^s$ to the solution of the heat equation given by $e^{t \Delta_\B }f(x)$. We recall the spectral decomposition 
\begin{equation}\label{eq:specdes2}
e^{t\Delta_\B}f(x)=\sum_{m=1}^\infty e^{-\lambda_m t}\hat{f}_m \varphi_m(x)
\end{equation}
and note that  $
\mI_r[e^{\cdot\Delta_\B}f(x)](t) =\sum_{m=1}^\infty \mI_r[e^{-\lambda_m \cdot}](t)\hat{f}_m\varphi_m(x)$ 
(see proof of Theorem \ref{thm:interp}). First, in Lemma \ref{lem:interp} we will provide estimates for the error introduced by $\mI_r[e^{-\lambda \cdot}]$ for every $\lambda>0$. Later, in Theorem \ref{thm:interp} we use the previous relation to bound the total error $(-\Delta_\B)^{-s}f- Q^s_r[e^{\cdot \Delta_\B }f]$ in $L^2(\Omega)$.

To prove Lemma \ref{lem:interp} we will use the following technical result.

\begin{lemma}\label{prop:prelim}
	Assume $s\in(0,1)$, $\nu\in(-s,1-s]$, $\lambda,\varDelta t>0$, $t_j:=j\varDelta t$ for all $j\in\N$ and $\phi(t):=e^{-\lambda t}$. Then there exists a constant $C=C(s,\nu)>0$ such that the following estimate holds for all $j\geq0$ if $\nu\in[0,1-s]$ and for all $j\geq1$ if $\nu\in(-s,0)$: 
	\[
	\int_{t_j}^{t_{j+1}}\phi(\eta)\left(\int_{\eta}^{t_{j+1}}\frac{dt}{t^{1-s}}\right)~d\eta \leq C  \varDelta t^{s+\nu} \int_{t_j}^{t_{j+1}}\frac{\phi(\eta)}{\eta^\nu}d\eta
	\]
\end{lemma}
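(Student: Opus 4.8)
The plan is to reduce the claim to a pointwise estimate on the kernel, after evaluating the inner integral explicitly. First I would compute $\int_\eta^{t_{j+1}} t^{s-1}\,dt=\tfrac{1}{s}(t_{j+1}^s-\eta^s)$, so that the left-hand side equals $\tfrac{1}{s}\int_{t_j}^{t_{j+1}}\phi(\eta)\bigl(t_{j+1}^s-\eta^s\bigr)\,d\eta$. Since $\phi\geq 0$, it suffices to establish the pointwise bound
\[
\tfrac{1}{s}\bigl(t_{j+1}^s-\eta^s\bigr)\leq C\,\varDelta t^{\,s+\nu}\,\eta^{-\nu},\qquad \eta\in[t_j,t_{j+1}],
\]
and then multiply by $\phi(\eta)$ and integrate over $[t_j,t_{j+1}]$. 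The whole statement therefore hinges on controlling $g(\eta):=(t_{j+1}^s-\eta^s)\,\eta^{\nu}$ by a multiple of $\varDelta t^{\,s+\nu}$ on each subinterval.

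The key elementary input is the concavity of $t\mapsto t^s$: writing $t_{j+1}^s-\eta^s=\int_\eta^{t_{j+1}} s\,t^{s-1}\,dt$ and using that $t^{s-1}$ is decreasing (as $s<1$), I obtain $t_{j+1}^s-\eta^s\leq s\,\eta^{s-1}(t_{j+1}-\eta)\leq s\,\varDelta t\,\eta^{s-1}$. Hence $g(\eta)\leq s\,\varDelta t\,\eta^{\,s-1+\nu}$, and the target reduces to $\eta^{\,s-1+\nu}\leq C\,\varDelta t^{\,s-1+\nu}$. Because $\nu\leq 1-s$, the exponent $s-1+\nu$ is nonpositive, so $\eta^{\,s-1+\nu}$ is nonincreasing; for $j\geq 1$ we have $\eta\geq t_j=j\varDelta t\geq\varDelta t$, and since $j^{\,s-1+\nu}\leq 1$ this yields $\eta^{\,s-1+\nu}\leq\varDelta t^{\,s-1+\nu}$, i.e. the pointwise bound with $C=1$. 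This argument covers all $j\geq 1$ uniformly in $\nu\in(-s,1-s]$, which is exactly the range where the lemma is claimed for $j\geq1$.

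The only remaining case is $j=0$, and I expect this to be the main (if mild) obstacle: the concavity bound is useless there because $\eta^{\,s-1+\nu}$ may blow up as $\eta\to0^+$, which is precisely why the hypotheses restrict to $\nu\in[0,1-s]$ in this case. For $j=0$ I would instead use the crude bound $t_1^s-\eta^s\leq\varDelta t^s$, so that $\tfrac{1}{s}(t_1^s-\eta^s)\leq\tfrac{1}{s}\varDelta t^s$; since $\nu\geq0$ and $\eta\leq\varDelta t$, we have $\varDelta t^{\,s+\nu}\eta^{-\nu}=\varDelta t^s(\varDelta t/\eta)^\nu\geq\varDelta t^s$, and the pointwise inequality holds with $C=1/s$. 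Collecting the two cases, $C=1/s$ works throughout (and is independent of $\nu$, which is stronger than required). Integrating the pointwise inequality against $\phi(\eta)\,d\eta\geq0$ then gives the stated estimate. The essential point is thus the split at the origin: one must replace the derivative/concavity estimate at $j=0$ by the trivial bound, and this replacement is what forces the sign restriction $\nu\geq0$ recorded in the hypotheses.
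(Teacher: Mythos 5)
Your proof is correct, and it takes a mildly but genuinely different route from the paper's. The paper first transfers the weight $\eta^{-\nu}$ \emph{inside} the inner integral: for $\nu\geq 0$ it uses $\eta^\nu\leq t^\nu$, and for $\nu<0$, $j\geq1$ it needs the ratio bound $t/\eta\leq (j+1)/j\leq 2$ to get $\eta^\nu\leq 2^{-\nu}t^\nu$; this turns the kernel $t^{s-1}$ into $t^{s+\nu-1}$, after which it bounds $t_{j+1}^{s+\nu}-\eta^{s+\nu}\leq \varDelta t^{s+\nu}$ by concavity of $\eta\mapsto\eta^{s+\nu}$ (using $\eta\geq t_1=\varDelta t$) for $j\geq1$, and trivially for $j=0$. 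You instead evaluate the inner integral in closed form and prove the pointwise kernel bound $\tfrac{1}{s}\bigl(t_{j+1}^{s}-\eta^{s}\bigr)\leq C\,\varDelta t^{\,s+\nu}\eta^{-\nu}$, via concavity of $\eta^s$ (monotonicity of $t^{s-1}$) together with the monotonicity of $\eta^{\,s-1+\nu}$ and $\eta\geq\varDelta t$ for $j\geq1$, and via the crude bound $t_1^s-\eta^s\leq\varDelta t^s$ for $j=0$. The ingredients coincide --- concavity of a fractional power away from the first interval, a trivial bound at the origin that is exactly what forces the restriction $\nu\geq0$ when $j=0$ --- but your ordering buys two small advantages: the sign of $\nu$ never requires a separate case for $j\geq1$ (the ratio-$2$ comparison disappears entirely), and your constant $C=1/s$ is uniform in $\nu$, whereas the paper's constant inherits a factor $\tfrac{1}{s+\nu}$ from integrating $t^{s+\nu-1}$ and so degenerates as $\nu\to-s^{+}$ (the lemma only asserts $C=C(s,\nu)$, so both are admissible, but yours is slightly stronger).
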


\begin{proof} We divide the proof in two steps.
	\textbf{1.}~If $\nu\geq0$, then $\eta^\nu\leq t^\nu$ and $\forall j\geq 0$
	\[
	\int_{t_j}^{t_{j+1}}\phi(\eta)~\frac{\eta^\nu}{\eta^\nu}\left(\int_{\eta}^{t_{j+1}}\frac{dt}{t^{1-s}}\right)d\eta \leq \int_{t_j}^{t_{j+1}}\frac{\phi(\eta)}{\eta^\nu}\left(\int_{\eta}^{t_{j+1}}\frac{dt}{t^{1-(s+\nu)}}\right)d\eta = C \int_{t_j}^{t_{j+1}}\frac{\phi(\eta)}{\eta^\nu}\left( t_{j+1}^{s+\nu}-\eta^{s+\nu} \right)d\eta.
	\]
	When $\nu<0$ and $j\geq1$, for any $\eta,t\in[t_j,t_{j+1}]$ we have $t/\eta\leq(j+1)/j\leq2$. Hence, $(t/\eta)^{-\nu}\leq 2^{-\nu}$, that is, $\eta^\nu\leq \frac{t^\nu}{2^v}$, and the estimate above holds (for a different constant $C$).
	
	\textbf{2.}~Note first that in all considered cases $0<s+\nu\leq 1$. If $j=0$, then by definition $t_{j+1}^{s+\nu}-\eta^{s+\nu}=\varDelta t^{s+\nu}-\eta^{s+\nu}\leq \varDelta t^{s+\nu}$. For $j\geq1$, we use the fact that the function $\psi(\eta):=\eta^{s+\nu}$ is concave. Hence, since $\eta \geq t_1= \varDelta t$, we obtain
	\[
	t_{j+1}^{s+\nu}-\eta^{s+\nu}\leq \psi'(\eta)(t_{j+1}-\eta)=(s+\nu)\eta^{s+\nu-1}(t_{j+1}-\eta)\leq (s+\nu) \varDelta t^{s+\nu-1}\varDelta t=\varDelta t^{s+\nu}.
	\]
	Combining this bound with the one of step \textbf{1.} gives the desired result.
\end{proof}

We now estimate the error introduced by  $\mI_r[e^{-\lambda \cdot}]$.

\begin{lemma}\label{lem:interp}
	Assume $s\in(0,1)$, $r\in [0,4-2s] $ and $\varDelta t, T, \lambda>0$. Let $\phi(t):=e^{-\lambda t}$ and $\mI_r$ be given by~\eqref{eq:interpintro}. Then there exists a constant $C=C(r,s)>0$ such that
\[
	\left| \int_0^T \left(\mI_r[\phi](t)-\phi(t) \right)\frac{d t}{t^{1-s}}\right|\leq C \lambda^{\frac{r}{2}}\varDelta t^{s+\frac{r}{2}}. 
\]
\end{lemma}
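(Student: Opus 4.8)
The lemma is a purely scalar statement about the single exponential $\phi(t)=e^{-\lambda t}$ (the spectral assembly over the eigenpairs happens later, in Theorem~\ref{thm:interp}), so the whole game is to track the two powers $\lambda^{r/2}$ and $\varDelta t^{s+r/2}$. The plan is to treat separately the three regimes defining $\mI_r$ in~\eqref{eq:interpintro}, and in each one to (i) express the interpolation error as an integral of a derivative of $\phi$ against a Taylor/Green kernel, (ii) interchange the order of integration with the singular weight $t^{s-1}\,dt$, and (iii) reduce to an estimate of the shape established in Lemma~\ref{prop:prelim}. The powers are then collected using $\phi'=-\lambda\phi$, $\phi''=\lambda^2\phi$ and the elementary identity $\int_0^\infty e^{-\lambda\eta}\eta^{-\nu}\,d\eta=\Gamma(1-\nu)\lambda^{\nu-1}$ (valid for $\nu<1$), which turns a factor $\varDelta t^{\,s+\nu}$ together with a suitable power of $\lambda$ into exactly $\lambda^{r/2}\varDelta t^{\,s+r/2}$ once $\nu$ is chosen correctly.

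In the piecewise constant regime $r\in[0,2-2s]$ I would start from $\mI_r[\phi](t)-\phi(t)=\phi(t_j)-\phi(t)=\lambda\int_{t_j}^{t}\phi(\eta)\,d\eta\ge0$ on each $[t_j,t_{j+1})$. Integrating against $t^{s-1}\,dt$ and interchanging the order of integration gives, on each subinterval, $\lambda\int_{t_j}^{t_{j+1}}\phi(\eta)\big(\int_{\eta}^{t_{j+1}}\tfrac{dt}{t^{1-s}}\big)\,d\eta$, which is precisely the left-hand side of Lemma~\ref{prop:prelim}. Applying that lemma with $\nu=r/2\in[0,1-s]$, summing over $j$ (all terms are nonnegative, so no cancellation is lost), and using the $\Gamma$-identity yields $\lambda\cdot\varDelta t^{\,s+r/2}\cdot\lambda^{r/2-1}=\lambda^{r/2}\varDelta t^{\,s+r/2}$.

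For the piecewise linear part, active on $[t_j,t_{j+1}]$ for $j\ge1$ when $r\in(2-2s,4-2s]$ (and also for $j=0$ when $r\in[2,4-2s]$), I would use $\mI_r[\phi](t)-\phi(t)=\int_{t_j}^{t_{j+1}}g_j(t,\eta)\,\phi''(\eta)\,d\eta$, where $g_j\ge0$ is the Green function of $-\partial_t^2$ on $[t_j,t_{j+1}]$ with homogeneous Dirichlet data and $\phi''=\lambda^2\phi$. After interchanging integrations, the task reduces to the kernel bound $G_j(\eta):=\int_{t_j}^{t_{j+1}}g_j(t,\eta)\tfrac{dt}{t^{1-s}}\le C\,\varDelta t^{\,1+s+\nu}\,\eta^{-\nu}$ with $\nu=\tfrac r2-1\in(-s,1-s]$, which is proved analogously to Lemma~\ref{prop:prelim} by comparing $\eta$ with $\varDelta t$ (using $g_j\le\varDelta t$ together with $t\sim\eta$ on the subinterval for $j\ge1$, and $\eta\le\varDelta t$ on the first interval). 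Multiplying by $\lambda^2$, summing, and invoking the $\Gamma$-identity again gives $\lambda^{2}\,\varDelta t^{\,1+s+\nu}\,\lambda^{\nu-1}=\lambda^{r/2}\varDelta t^{\,s+r/2}$; note that the restriction $j\ge1$ needed when $\nu<0$ matches exactly the hypotheses of Lemma~\ref{prop:prelim}.

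The remaining piece, and the point I expect to be the genuine obstacle, is the first interval $[0,\varDelta t)$ in the intermediate regime $r\in(2-2s,2)$, where $\mI_r[\phi]\equiv\phi(t_1)$: away from the origin everything sums cleanly, but on $[0,\varDelta t)$ the singular weight $t^{s-1}$ meets the regime where $\lambda\varDelta t$ may be either small or large, so a naive pointwise bound fails. Interchanging integrations gives $\big|\int_0^{\varDelta t}(\phi(t_1)-\phi(t))\tfrac{dt}{t^{1-s}}\big|=\tfrac{\lambda}{s}\int_0^{\varDelta t}\phi(\eta)\,\eta^{s}\,d\eta$, and I would bound this by splitting on $\lambda\varDelta t$: if $\lambda\varDelta t\le1$, estimate $\phi\le1$ to get $C\lambda\varDelta t^{1+s}=C(\lambda\varDelta t)^{1-r/2}\,\lambda^{r/2}\varDelta t^{\,s+r/2}\le C\lambda^{r/2}\varDelta t^{\,s+r/2}$; if $\lambda\varDelta t>1$, extend the integral to $[0,\infty)$ and use the $\Gamma$-identity to get $C\lambda^{-s}=C(\lambda\varDelta t)^{-s-r/2}\,\lambda^{r/2}\varDelta t^{\,s+r/2}\le C\lambda^{r/2}\varDelta t^{\,s+r/2}$. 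Collecting the three contributions by the triangle inequality completes the estimate; the tailored constant interpolant $\phi(t_1)$ and the correct sign of the Green kernel, combined with this two-regime split in $\lambda\varDelta t$, are precisely what make the endpoint work.
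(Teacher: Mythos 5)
Your proposal is correct and follows essentially the same route as the paper: the same three-regime decomposition according to \eqref{eq:interpintro}, the same interchange of the order of integration reducing everything to the weighted-kernel estimate of Lemma~\ref{prop:prelim} (with $\nu=\frac{r}{2}$ in the piecewise constant case and $\nu=\frac{r}{2}-1$ in the piecewise linear one, with the restriction $j\geq 1$ precisely when $\nu<0$), and the same power counting via the identity \eqref{eq:numPower}. The only cosmetic differences are that the paper represents the piecewise-linear error through a Taylor expansion with explicit remainder and the bound $1-\phi(\varDelta t(1-\xi))\leq \lambda\varDelta t$, arriving at $\mI_r[\phi](t)-\phi(t)\leq \varDelta t\,\lambda^2\int_{t_j}^{t}\phi(\eta)\,d\eta$ so that Lemma~\ref{prop:prelim} applies verbatim (rather than your Dirichlet Green kernel bound, which encodes the same estimate), and that it handles the first interval in the regime $r\in(2-2s,2)$ with the one-line inequality $1-e^{-\xi}\leq\min\{\xi,1\}\leq\xi^{\frac{r}{2}}$, which is exactly your two-case split on $\lambda\varDelta t\lessgtr 1$ compressed into a single pointwise bound.
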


\begin{proof}[Proof of Lemma \ref{lem:interp}] We divide the proof in three steps, considering three different ranges for $r$.\\
	\textbf{1.}~Let $r\in[0,2-2s]$ and define
	\begin{equation*}
	I:=\left| \int_0^T \left(\mI_r[\phi](t)-\phi(t) \right)\frac{d t}{t^{1-s}}\right|= \left|\sum_{j=0}^{N_T} \int_{t_j}^{t_{j+1}} \left(\phi(t_j)-\phi(t) \right)\frac{d t}{t^{1-s}}\right|=\left|\sum_{ j=0}^{N_T} \int_{t_j}^{t_{j+1}} \left(-\int_{t_j}^t \phi'(\eta) d \eta \right)\frac{d t}{t^{1-s}}\right|.
	\end{equation*}
	Noting that $\phi'(t)=-\lambda \phi(t)$ and that all integrands above are positive, we can remove the absolute value and rewrite $I$ as
	\begin{equation*}
	I=\lambda \sum_{j=0}^{N_T} \int_{t_j}^{t_{j+1}} \left(\int_{t_j}^t \phi(\eta) d \eta\right) \frac{d t}{t^{1-s}}= \lambda \sum_{j=0}^{N_T} \int_{t_j}^{t_{j+1}} \phi(\eta) \left(\int_{\eta}^{t_{j+1}}  \frac{d t}{t^{1-s}}\right) d\eta.
	\end{equation*}
		Since $0< \frac{r}{2}\leq1-s$, we apply Lemma~\ref{prop:prelim} with $\nu=\frac{r}{2}$ for all $j\geq0$ together with \eqref{eq:numPower} to obtain
		\begin{equation*}
		\begin{split}
		I\leq \lambda \varDelta t^{s+\frac{r}{2}} \sum_{j=0}^{N_T} \int_{t_j}^{t_{j+1}}\frac{\phi(\eta)}{\eta^\frac{r}{2}}d\eta \leq C\lambda\varDelta t^{s+\frac{r}{2}} \int_0^\infty\frac{\phi(\eta)}{\eta^{\frac{r}{2}}} d \eta \leq C\lambda\varDelta t^{s+\frac{r}{2}} \Gamma\left(1-\frac{r}{2}\right) \lambda^{-1+\frac{r}{2}}= C\varDelta t^{s+\frac{r}{2}}  \lambda^{\frac{r}{2}}.
		\end{split}
		\end{equation*}
	
	\textbf{2.}~If $r\in(2-2s,2)$ then
	\[
	I \leq \left|\int_0^{\Delta t} \left(\phi(t_1)-\phi(t)\right)\frac{d t}{t^{1-s}}\right|+ \left| \sum_{j=1}^{N_T}  \int_{t_{j}}^{t_{j+1}}  (\mI_r[\phi](t)-\phi(t)) \frac{d t}{t^{1-s}}\right|=:I_1+I_2.
	\]
	First we bound $I_1$. Note that $1-e^{-\xi}\leq \min\{\xi,1\}\leq \xi^{\frac{r}{2}}$ for all $\xi\geq0$ and $r\in(0,2]$. Thus, for $t\in[0,t_1]$,
		\[
		0\leq\phi(t)-\phi(t_1)=\phi(t)(1-\phi(t_1-t))\leq \phi(t) (1-\phi(t_1))\leq \phi(t)~(\lambda t_1)^{\frac{r}{2}} \leq(\lambda t_1)^{\frac{r}{2}},
		\]
		so that
$
	I_1\leq \varDelta t^{\frac{r}{2}}\lambda^{\frac{r}{2}}\int_0^{\Delta t} \frac{d t}{t^{1-s}}\leq C\varDelta t^{s+\frac{r}{2}}  \lambda^{\frac{r}{2}}. 
$

We now bound $I_2$. For $t\in[t_j,t_{j+1})$ we use the Taylor expansion with explicit reminder and the explicit form of $\mI_r$ given by \eqref{eq:interpintro} to get
	\begin{equation*}
	\begin{split}
	\mI_r[\phi](t)-\phi(t)&=- \frac{(t-t_j)(t_{j+1}-t)}{\Delta t} \int_0^1 \Big[\phi'(t_j(1-\xi)+\xi t)-\phi'(t_{j+1}(1-\xi)+\xi t)\Big] d\xi \\
	&= \lambda \frac{(t-t_j)(t_{j+1}-t)}{\Delta t}\int_0^1 \phi(t_j(1-\xi)+\xi t) \Big[1-\phi(\Delta t(1-\xi))\Big]d \xi.
	\end{split}
	\end{equation*}
	Observe that for all $\xi\in[0,1]$, $1-\phi(\Delta t(1-\xi))\leq 1-\phi(\varDelta t) \leq \lambda \varDelta t$. Then
		\begin{equation}\label{eq:auxest3}
	\begin{split}
	\mI_r[\phi](t)-\phi(t)&\leq \lambda^2 (t-t_j)(t_{j+1}-t) \int_0^1 \phi(t_j(1-\xi)+\xi t) ~d\xi\\
	&= \lambda^2 (t-t_j)(t_{j+1}-t)\int_{t_j}^{t} \phi(\eta) \frac{d \eta}{t-t_j}\leq \varDelta t \lambda^2 \int_{t_j}^{t} \phi(\eta) ~d \eta.
	\end{split}
	\end{equation}
Combining estimate \eqref{eq:auxest3}, Lemma \ref{lem:interp} with $\nu =\frac{r}{2}-1 \in (-s,0)$ for all $j\geq1$, and \eqref{eq:numPower} we get
\begin{equation*}
\begin{split}
		I_2&\leq  \varDelta t \lambda^2 \sum_{j=1}^{N_T}  \int_{t_{j}}^{t_{j+1}} \left(\int_{t_j}^{t} \phi(\eta) ~d \eta \right)\frac{d t}{t^{1-s}}= \varDelta t \lambda^2 \sum_{j=1}^{N_T}  \int_{t_{j}}^{t_{j+1}} \phi(\eta)\left(\int_{\eta}^{t_{j+1}}  \frac{d t}{t^{1-s}}\right)d\eta\\
		&\leq  C \varDelta t \lambda^2 \varDelta t^{s+\frac{r}{2}-1} \int_0^\infty \frac{\phi(\eta)}{\eta^{\frac{r}{2}-1}}d\eta= C  \varDelta t^{s+\frac{r}{2}}\lambda^2~ \Gamma\left(2-\frac{r}{2}\right)\lambda^{-2+\frac{r}{2}}=C \varDelta t^{s+\frac{r}{2}} \lambda^{\frac{r}{2}}
\end{split}
\end{equation*}
		\textbf{3.}~Let $r\in[2,4-2s]$. Note that the bound \eqref{eq:auxest3} also holds for $j=0$ and hence in this case we can apply Lemma \ref{prop:prelim} with $\nu=\frac{r}{2}-1\in[0,1-s]$ for all $j\geq0$. Then, once again,
		\[
		\left| \int_0^T \left(\mI_r[\phi](t)-\phi(t) \right)\frac{d t}{t^{1-s}}\right|\leq \varDelta t \lambda^2 \sum_{j=0}^{N_T}  \int_{t_{j}}^{t_{j+1}} \phi(\eta) \left(\int_{\eta}^{t_{j+1}} \frac{d t}{t^{1-s}}\right)d\eta \leq 
		C \varDelta t^{s+\frac{r}{2}} \lambda^{\frac{r}{2}} . 
		\]
\end{proof}

\subsection{Quadrature error on the heat semigroup}
We provide now a bound for the approximation error obtained when our quadrature is applied to the heat semigroup. This bound depends on the regularity of the initial condition $f$ and the proof of this result exploits the asymptotic behaviour of the heat semigroup, approaching zero exponentially fast.

\begin{theorem}\label{thm:interp}
Assume $n\geq1$, $s\in (0,1)$ and \eqref{as:B} and $\Omega$ be bounded subset of $\R^n$. Let $f\in \HH^r(\Omega)$ for some $r\in[0,4-2s]$ and $T> \frac{1}{\lambda_{\min}}\log \left(\frac{1}{\varDelta t^{\frac{r}{2}+s}}\right)$, with $\lambda_{\min}$ the first positive eigenvalue of $(-\Delta_\B)$ on~$\Omega$. Then
\[
\|(-\Delta_\B)^{-s}f- Q^s_r[e^{\cdot \Delta_\B }f]\|_{L^2(\Omega)}\leq C \|f\|_{\HH^r(\Omega)} \varDelta t^{\frac{r}{2}+s}.
\]
where $C=C(r,s,\Omega,\B)>0$.
\end{theorem}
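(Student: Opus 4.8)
The plan is to split the total error into a quadrature part on $[0,T]$ and a truncation (tail) part on $[T,\infty)$, and to estimate each by passing to the spectral decomposition \eqref{eq:specdes2} and invoking Parseval's identity. Using Lemma \ref{lem:splitinterp} to rewrite $Q^s_r[e^{\cdot\Delta_\B}f]$ as $\frac{1}{\Gamma(s)}\int_0^T \mI_r[e^{\cdot\Delta_\B}f](t)\frac{dt}{t^{1-s}}$ together with the representation \eqref{eq:IFL} for $\DBm f$, I would write
\[
\DBm f - Q^s_r[e^{\cdot\Delta_\B}f] = \underbrace{\frac{1}{\Gamma(s)}\int_0^T\big(e^{t\Delta_\B}f-\mI_r[e^{\cdot\Delta_\B}f](t)\big)\frac{dt}{t^{1-s}}}_{=:E_1} + \underbrace{\frac{1}{\Gamma(s)}\int_T^\infty e^{t\Delta_\B}f\,\frac{dt}{t^{1-s}}}_{=:E_2},
\]
and bound $\|E_1\|_{L^2(\Omega)}$ and $\|E_2\|_{L^2(\Omega)}$ separately, each by $C\|f\|_{\HH^r(\Omega)}\varDelta t^{r/2+s}$, before concluding with the triangle inequality.

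For $E_1$ (the genuine quadrature error) I would exploit that $\mI_r$ acts linearly on its argument and that the normalized eigenfunctions $\{\varphi_m\}$ form an $L^2(\Omega)$-orthonormal basis. Substituting \eqref{eq:specdes2} and using $\mI_r[e^{\cdot\Delta_\B}f(x)](t)=\sum_m \mI_r[e^{-\lambda_m\cdot}](t)\hat f_m\varphi_m(x)$, the error collapses to $E_1(x)=\sum_m \hat f_m\,c_m\,\varphi_m(x)$ with scalar coefficients $c_m:=\frac{1}{\Gamma(s)}\int_0^T\big(e^{-\lambda_m t}-\mI_r[e^{-\lambda_m\cdot}](t)\big)\frac{dt}{t^{1-s}}$. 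Lemma \ref{lem:interp} applied with $\phi=e^{-\lambda_m\cdot}$ gives $|c_m|\le C\lambda_m^{r/2}\varDelta t^{r/2+s}$, so Parseval yields
\[
\|E_1\|_{L^2(\Omega)}^2 = \sum_m |\hat f_m|^2|c_m|^2 \le C\varDelta t^{r+2s}\sum_m\lambda_m^r|\hat f_m|^2 = C\varDelta t^{r+2s}\|f\|_{\HH^r(\Omega)}^2,
\]
which is exactly the claimed rate after taking square roots.

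For the tail $E_2$ the same spectral expansion gives $E_2(x)=\sum_m\hat f_m\varphi_m(x)\cdot\frac{1}{\Gamma(s)}\int_T^\infty e^{-\lambda_m t}\frac{dt}{t^{1-s}}$. The key estimate is to shift $t\mapsto t+T$ and use $(t+T)^{s-1}\le t^{s-1}$ together with \eqref{eq:numPower} to obtain $\frac{1}{\Gamma(s)}\int_T^\infty e^{-\lambda_m t}\frac{dt}{t^{1-s}}\le e^{-\lambda_m T}\lambda_m^{-s}$. Since $\lambda_m\ge\lambda_{\min}$ and the choice $T>\frac{1}{\lambda_{\min}}\log(\varDelta t^{-(r/2+s)})$ forces $e^{-\lambda_{\min}T}<\varDelta t^{r/2+s}$, each tail coefficient is bounded by $\lambda_{\min}^{-s}\varDelta t^{r/2+s}$ uniformly in $m$. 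Parseval then gives $\|E_2\|_{L^2(\Omega)}\le C\varDelta t^{r/2+s}\|f\|_{L^2(\Omega)}$, and since $\|f\|_{L^2(\Omega)}=\|f\|_0\le C\|f\|_{\HH^r(\Omega)}$ for $r\ge0$, this matches the required bound.

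The main obstacle I anticipate is rigorously justifying the interchange of the infinite spectral sum with both the linear interpolation operator $\mI_r$ and the singular integral $\int_0^T(\cdot)\frac{dt}{t^{1-s}}$ (and likewise for the tail), since the measure $dt/t^{1-s}$ is singular at the origin. I would handle this by working with partial sums and passing to the limit via dominated convergence: the per-mode bounds of Lemma \ref{lem:interp}, combined with $\sum_m\lambda_m^r|\hat f_m|^2<\infty$, supply the integrable/summable majorant needed, while the non-negativity of the quadrature weights noted in Remark \ref{rem:weights} gives additional uniform control on the interpolant. Once this interchange is secured, the rest is the routine assembly of the two estimates through the triangle inequality.
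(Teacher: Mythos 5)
Your proposal is correct and follows essentially the same route as the paper's proof: the same splitting of the error into the quadrature part on $[0,T]$ and the truncated tail on $[T,\infty)$, the same spectral decomposition and Parseval reduction to the per-mode estimate of Lemma~\ref{lem:interp}, and the same use of the choice of $T$ to force $e^{-\lambda_{\min}T}<\varDelta t^{\frac{r}{2}+s}$. The only cosmetic difference is in the tail: the paper applies the Minkowski integral inequality to $\|J_2\|_0$ together with the uniform decay $\|e^{t\Delta_\B}f\|_0\leq e^{-\lambda_{\min}t}\|f\|_0$, whereas you bound each spectral coefficient via the shift $t\mapsto t+T$ and \eqref{eq:numPower} before invoking Parseval --- both yield the same bound with the same constant dependence.
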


\begin{proof}[Proof of Theorem \ref{thm:interp}]
First we consider
\[
(-\Delta_\B)^{-s}f(x)= \frac{1}{\Gamma(s)} \int_0^T e^{t\Delta_\B}f(x)\frac{d t}{t^{1-s}}+\frac{1}{\Gamma(s)}\int_T^\infty e^{t\Delta_\B}f(x)\frac{d t}{t^{1-s}}=:J_1(x)+J_2(x).
\]
We proceed in two steps: first we bound $\|J_1-Q^s_r[e^{\cdot \Delta_\B }f]\|_0$ and then $\|J_2\|_0$ which allows us to conclude the proof via the triangle inequality. \\
\textbf{1.}~As shown in the proof of Lemma~\ref{lem:splitinterp}, the interpolant defined in \eqref{eq:interpintro} can be written as
$
\mI_r[\rho](t)= \sum_{j=0}^{N_T}  \rho(t_j) \mathcal{P}_j(t)
$
with $\mathcal{P}_j$ given by \eqref{eq:mathcalP}. Hence, the spectral decomposition \eqref{eq:specdes2} allows us to write
\begin{equation*}
\begin{split}
\mI_r[e^{\cdot\Delta_\B}f(x)](t) &= \sum_{j=0}^{N_T}  e^{t_j\Delta_\B}f(x) \mathcal{P}_j(t)=\sum_{j=0}^{N_T} \left(\sum_{m=1}^\infty e^{-\lambda_m t_j} \hat{f}_m\varphi_m(x)\right)\mathcal{P}_j(t)\\
&= \sum_{m=1}^\infty \left(\sum_{j=0}^{N_T}e^{-\lambda_m t_j}\mathcal{P}_j(t)\right) \hat{f}_m\varphi_m(x)=\sum_{m=1}^\infty \mI_r[e^{-\lambda_m \cdot}](t)\hat{f}_m\varphi_m(x).
\end{split}
\end{equation*}
Therefore,  
\begin{equation*}
\begin{split}
J_1(x)-Q^s_r[e^{\cdot \Delta_\B}f(x)]&=\frac{1}{\Gamma(s)}\int_0^T \left[e^{t\Delta_\B}f(x)- \mI_r[e^{\cdot \Delta_\B }f(x)](t)\right] \frac{d t}{t^{1-s}}\\
&=\frac{1}{\Gamma(s)}\int_0^T \left[\sum_{m=1}^\infty \left(e^{-\lambda_m t} - \mI_r[e^{-\lambda_m \cdot}](t)\right) \hat{f}_m\varphi_m(x)\right] \frac{d t}{t^{1-s}}\\
&=\frac{1}{\Gamma(s)}\sum_{m=1}^\infty \hat{f}_m \varphi_m(x) \int_0^T (e^{-\lambda_m t}-\mI_r[e^{-\lambda_m \cdot}](t) )\frac{d t}{t^{1-s}}
\end{split}
\end{equation*}
The above identity, together with Lemma \ref{lem:interp}, shows that
\begin{equation*}
\begin{split}
\|J_1-Q^s_r[e^{\cdot \Delta_\B }f]\|_0^2&=\frac{1}{\Gamma(s)^2} \sum_{m=1}^\infty |\hat{f}_m|^2 \left( \int_0^T (e^{-\lambda_m t}-\mI_r[e^{-\lambda_m \cdot}](t) )\frac{d t}{t^{1-s}}\right)^2 \\
&\leq  C \varDelta t^{r+2s}  \sum_{m=1}^\infty |\hat{f}_m|^2 \lambda_m^r=C \varDelta t^{r+2s} \|f\|_r^2
\end{split}
\end{equation*}

\textbf{2.}~Bound for $\|J_2\|_0$. Since $r\geq0$, then $f\in L^2(\Omega)$ and $\|f\|_0\leq C \|f\|_r$ for some positive constant $C=C(\Omega,r)$. Hence, the expression of $e^{t\Delta_\B}f$ given by \eqref{eq:Wseries} yields
\[
\|e^{t\Delta_\B}f\|_0=\left(\sum_{m=1}^\infty e^{-2\lambda_m t} |\hat{f}_m|^2\right)^{\frac{1}{2}}\leq e^{-\lambda_{\min}t}\left(\sum_{m=1}^\infty |\hat{f}_m|^2\right)^{\frac{1}{2}}=e^{-\lambda_{\min}t}\|f\|_0\leq Ce^{-\lambda_{\min}t}\|f\|_r
\]
This, together with Minkowski integral inequality, implies
\begin{equation*}
\begin{split}
\|J_2\|_{0}&\leq \frac{1}{\Gamma(s)}\int_T^\infty \|e^{t\Delta_\B}f\|_0\frac{d t}{t^{1-s}}\leq C \|f\|_r \int_T^\infty e^{-\lambda_{\min}t} \frac{d t}{t^{1-s}}\leq  C \|f\|_r  e^{-\lambda_{\min}T}\leq C \|f\|_r \varDelta t^{\frac{r}{2}+s}.
\end{split}
\end{equation*}
where the last inequality follows from the fact that $T>\frac{1}{\lambda_{\min}}\log \left(\frac{1}{\varDelta t^{\frac{r}{2}+s}}\right)$. The proof concludes by combining the bounds above and the fact that 
\[
\|(-\Delta_\B)^{-s}f- Q^s_r[e^{\cdot \Delta_\B }f]\|_0=\|J_1- Q^s_r[e^{\cdot \Delta_\B }f]+J_2\|_0\leq  \|J_1-Q^s_r[e^{\cdot \Delta_\B}f]\|_0+\|J_2\|_0.
\]
\end{proof}

\section{Finite elements estimates and proof of the main result}\label{sec:FE}
Let $\Omega$ be a smooth domain and let $w_{k,\tau}$ be the FE approximation of the heat semigroup defined in Section~\ref{sec:finElem}. Then, classical FE estimates for the $L^2$-error in the semi-discrete and fully discrete solution of~\eqref{eq:general} (the results of Theorems 3.5 and 7.3 in~\cite{Vid97}) can be combined to give the following bound depending on $\varDelta t$, $h$, the regularity of $f$, and $t_j$ with $j\geq 1$:
\begin{equation}\label{eq:errorFEVidar}
\|w_{k,\tau}(\cdot, t_j)-e^{t_j \Delta_\B}f(\cdot)\|_0\leq C_1 h^{l}~t_j^{-\frac{l-p}{2}}\|f\|_p + C_2 \varDelta t^{\llll}~ t_j^{-(\llll-\tilde{p})}\|f\|_{2\tilde{p}} \quad \textup{for any} \quad \left\{\begin{split}
&0\leq p \leq l \leq k+1\\
&0\leq \pp \leq \llll \leq \tau.
\end{split}\right.
\end{equation}

However, in non-smooth domains the above estimate does not hold in full generality because of limited elliptic regularity~\cite{grisvard_book-1985}, which essentially prevents obtaining high-order FE approximations in space near the domain boundary. Nevertheless, for convex polygonal domains the bound~\eqref{eq:errorFEVidar} still holds if we restrict $k$ to be $k=1$, that is, provided that linear FE are considered. 

Through \eqref{eq:errorFEVidar} we are able to prove the following result for smooth domains:
\begin{proposition}\label{thm:FE2} Under the assumptions and notation of Theorem \ref{thm:smooth1} we have that 
\[
\|Q^s_r[w_{k,\tau}]-Q^s_r[e^{\cdot \Delta_\B }f]\|_0\leq C_{r,h} ~ h^{\min\{k+1, a(\frac{r}{2}+s)\}} \|f\|_r.
\]
\end{proposition}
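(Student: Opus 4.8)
The plan is to exploit the linearity of the quadrature and reduce the estimate to a weighted sum of the pointwise-in-time finite element errors, which are controlled by \eqref{eq:errorFEVidar}. Since $Q^s_r[w_{k,\tau}]-Q^s_r[e^{\cdot\Delta_\B}f]=\frac{1}{\Gamma(s)}\sum_{j=0}^{N_T}\beta_j\,(w_{k,\tau}(\cdot,t_j)-e^{t_j\Delta_\B}f)$, the triangle inequality in $L^2(\Omega)$ gives
\[
\|Q^s_r[w_{k,\tau}]-Q^s_r[e^{\cdot\Delta_\B}f]\|_0\le \frac{1}{\Gamma(s)}\sum_{j=0}^{N_T}\beta_j\,\|w_{k,\tau}(\cdot,t_j)-e^{t_j\Delta_\B}f\|_0.
\]
I would first treat $j=0$ separately, because \eqref{eq:errorFEVidar} degenerates at $t_0=0$: here $w_{k,\tau}(\cdot,0)$ is the $L^2$-projection of $f$ onto $V_h$ while $e^{0\cdot\Delta_\B}f=f$, so the standard projection estimate gives $\|w_{k,\tau}(\cdot,0)-f\|_0\le Ch^{\min\{r,k+1\}}\|f\|_r$; combined with $\beta_0\le C\varDelta t^s$ from Remark~\ref{rem:weights} and $\varDelta t\sim h^a$, this contributes $h^{as+\min\{r,k+1\}}\|f\|_r$, which a short check (splitting on whether $r\lessgtr k+1$) shows is never larger than $h^{\min\{k+1,a(r/2+s)\}}\|f\|_r$.

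For the remaining sum ($j\ge1$) I would insert \eqref{eq:errorFEVidar}, which splits the error into a spatial part $h^l t_j^{-(l-p)/2}\|f\|_p$ and a temporal part $\varDelta t^{\tilde l}t_j^{-(\tilde l-\tilde p)}\|f\|_{2\tilde p}$, and bound the two resulting sums separately. The key tool is Remark~\ref{rem:weights}, namely $\beta_j\le C\int_{t_j}^{t_{j+1}}\frac{dt}{t^{1-s}}$, together with $t_j\sim t$ on $[t_j,t_{j+1}]$ for $j\ge1$, which converts each sum into an integral of the form $\int_{\varDelta t}^{T}t^{s-1-\gamma}\,dt$ with $\gamma=(l-p)/2$ (spatial) or $\gamma=\tilde l-\tilde p$ (temporal). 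The crucial observation is that the indices in \eqref{eq:errorFEVidar} may be chosen differently on different time scales, so I would split $\{t_j>0\}$ at a fixed time $t=1$: near the origin I would take full regularity, $l=k+1$, $p=\min\{r,k+1\}$ for the spatial term and $\tilde l=\tau$, $\tilde p=r/2$ for the temporal term (so that $\|f\|_p,\|f\|_{2\tilde p}\le C\|f\|_r$), whereas for large times I would exploit the parabolic smoothing by taking $p=\tilde p=0$, which renders $t^{s-1-\gamma}$ integrable at infinity and yields a clean, $T$-independent tail bound $Ch^{k+1}\|f\|_r$ (spatial) and $C\varDelta t^{\tau}\|f\|_r$ (temporal), both dominated by $h^{\min\{k+1,a(r/2+s)\}}$.

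On the near-origin piece the exponent $\gamma$ is then compared with $s$. When $\gamma>s$ the integral scales like $\varDelta t^{s-\gamma}$, producing $h^{k+1}\varDelta t^{s-(k+1-r)/2}=h^{(k+1)(1-a/2)+a(r/2+s)}$ for the spatial term and $\varDelta t^{\tilde p+s}=\varDelta t^{r/2+s}=h^{a(r/2+s)}$ for the temporal term, both bounded by $h^{\min\{k+1,a(r/2+s)\}}$ since $a\le2$; when $\gamma<s$ the integral is finite and one recovers a clean $h^{k+1}$, again dominating the claimed rate; and when $\gamma=s$ the integral is logarithmic, $\sim\log(\varDelta t^{-1})\sim\log(h^{-a})$. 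A direct computation shows $\gamma=s$ occurs in the spatial term exactly at $r=k+1-2s$ and in the temporal term exactly at $r=4-2s$ (where $\tilde\gamma=\tau-r/2=s$ for $\tau=2$), which is precisely where the factor $C_{r,h}=C\log(h^{-a})$ is generated; for all other $r$ the finite/tail contributions are absorbed into the constant because the effective power of $h$ is then strictly subcritical. Adding the spatial and temporal estimates and the $j=0$ contribution yields the claim.

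The step I expect to be the main obstacle is organizing this final case analysis: making the adaptive choice of regularity indices on the two time scales precise, verifying via the heat-semigroup smoothing that the large-$t$ contributions are genuinely $T$-independent, and confirming that the logarithmic factor $C_{r,h}$ is produced only at the two critical values $r\in\{k+1-2s,4-2s\}$ and nowhere else. The exponent bookkeeping (and the use of the nesting $\|f\|_p\le C\|f\|_r$ for $p\le r$) is routine but delicate, since an overly crude single choice of parameters — for instance $l=k+1$ on all of $[0,T]$ — would spuriously introduce a factor $T^{s}\sim(\log h^{-1})^s$ and fail to match the stated constant.
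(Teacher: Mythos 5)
Your proof is correct and follows essentially the same route as the paper's: triangle inequality combined with the weight bounds of Remark~\ref{rem:weights}, separate treatment of the $j=0$ projection term via $\beta_0\leq C\varDelta t^s$, a split of the remaining sum at $t=1$, and the estimate \eqref{eq:errorFEVidar} with $p=\tilde{p}=0$ on the tail. The only difference is cosmetic bookkeeping on the near-origin sum: the paper selects $\epsilon$-shifted indices (e.g.\ $l=r+2s+\epsilon$, $\llll=r/2+s+\epsilon/2$, and $p=k+1-2s+\epsilon$ in the intermediate range) so each case produces a controlled mildly divergent integral, whereas you fix the extreme choices $l=k+1$, $p=\min\{r,k+1\}$, $\llll=\tau$, $\tilde{p}=r/2$ and run the trichotomy $\gamma\lessgtr s$ on $\int_{\varDelta t}^{1}t^{s-1-\gamma}\,dt$ --- both yield identical exponents and isolate the logarithm at exactly $r\in\{k+1-2s,\,4-2s\}$.
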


\begin{proof}
Here, to simplify the notation we will use the symbol $a\wedge b:=\min\{a,b\}$. Let $N_1=\lceil \varDelta t^{-1} \rceil$ so that $t_j<1$ for all $j< N_1$, and let $f_h:=w_{k,\tau}(\cdot,0)$ be the $L^2(\Omega)$-orthogonal projection of $f$ onto the FE space $V_h$ introduced in Section~\ref{sec:finElem}. Then
\begin{equation}\label{eq:splitsum}
\begin{split}
Q^s_r&[w_{k,\tau}(x,\cdot)]-Q^s_r[e^{\cdot \Delta_\B }f(x)]= \frac{1}{\Gamma(s)} \sum_{j=0}^{N_T}\left(w_{k,\tau}(x,t_j)-e^{t_j\Delta_\B}f(x)\right)\beta_j\\
&= \frac{1}{\Gamma(s)}\left((f_h-f)\beta_0+ \sum_{j=1}^{N_1-1} (w_{k,\tau}(x,t_j)-e^{t_j\Delta_\B}f(x))\beta_j + \sum_{j=N_1}^{N_T} (w_{k,\tau}(x,t_j)-e^{t_j\Delta_\B}f(x))\beta_j\right).
\end{split}
\end{equation}
In order to bound the norm $\|Q^s_r[w_{k,\tau}]-Q^s_r[e^{\cdot \Delta_\B }f]\|_0$, we bound separately the three terms above.

\textbf{1.}~$f\in \HH^r(\Omega)$ imply $\|f_h-f\|_0\leq C h^{(k+1)\wedge r} \|f\|_r$. Since $\beta_0\leq C \varDelta t^s$ and $\varDelta t \sim h^a$ for some $a\in(0,2]$, then
\[
 \|f-f_h\|_0~\beta_0\leq C~\|f\|_r~ h^{(k+1)\wedge r}  \varDelta t^s=C~\|f\|_r~h^{(k+1+as) \wedge (r+as)}\leq C~\|f\|_r~h^{(k+1) \wedge a(\frac{r}{2}+s)}. 
\]

\textbf{2.}~For the second term in \eqref{eq:splitsum} we have that 
\[
I:=\left\|\sum_{j=1}^{N_1-1} (w_{k,\tau}(\cdot,t_j)-e^{t_j\Delta_\B}f(\cdot))\beta_j\right\|_0 \leq \sum_{j=1}^{N_1-1} \|w_{k,\tau}(\cdot,t_j)-e^{t_j\Delta_\B}f(\cdot)\|_0~\beta_j.
\]
Let $r\in[0,\min\{k+1,4\}-2s)$. We will use estimate~\eqref{eq:errorFEVidar} with a precise choice of the parameters $p,\pp,l$ and $\llll$. Since $r+2s<(k+1)\wedge 4$, there exists an $\epsilon>0$ such that $r+2s+\epsilon\leq (k+1)\wedge 4$. Hence, we set $p=r$, $\pp=r/2$, $l=r+2s+\epsilon$ and $\llll=r/2+s+\epsilon/2$. Clearly, $0\leq p\leq l\leq k+1$ and $0\leq\pp \leq \llll \leq \tau$ ($\tau=1$ is admissible if $r\in[2-2s)$ but $\tau=2$ is necessary if $r\geq2-2s$). We use now \eqref{eq:errorFEVidar}, the bound on the $\beta_j$ given by Remark~\ref{rem:weights}, and the fact that for all $j\geq1$ we have $1/t_j\leq 2 /t_{j+1}$. Then
\begin{equation*}
\begin{split}
\sum_{j=1}^{N_1-1} \|w_{k,\tau}(\cdot,t_j)-e^{t_j\Delta_\B}f(\cdot)\|_0~\beta_j &\leq C\|f\|_r (h^{r+2s+\epsilon}+ \varDelta t^{\frac{r}{2}+s+\frac{\epsilon}{2}}) \sum_{j=1}^{N_1-1} \frac{1}{t_{j+1}^{s+\frac{\epsilon}{2}}}\int_{t_{j}}^{t_{j+1}}\frac{dt}{t^{1-s}}\\
&\leq C\|f\|_r (h^{r+2s+\epsilon}+ \varDelta t^{\frac{r}{2}+s+\frac{\epsilon}{2}}) \int_{\varDelta t}^{1}\frac{dt}{t^{1+\frac{  \epsilon}{2}}}\\
&\leq C\|f\|_r (h^{r+2s+\epsilon}+ \varDelta t^{\frac{r}{2}+s+\frac{\epsilon}{2}}) \varDelta t^{-\frac{  \epsilon}{2}}
\end{split}
\end{equation*}
The relation $\varDelta t\sim h^a$ for $a\in(0,2]$ implies $(1-\frac{a}{2})\epsilon>0$ and $a(\frac{r}{2}+s)<r+2s$, so that $I\leq C\|f\|_r h^{a(\frac{r}{2}+s)}.$

If $k\geq 3$, then only the case $r=4-2s$ was not covered via the previous approach. Now let either $r=k+1-2s$ with $k=1,2$ or $r=4-2s$ for any $k\geq 1$. This time the error bound involves a logarithmic correction factor. In fact, in this case the best parameter choice is obtained by setting $p=r$, $l=k+1 $, $\pp=r/2$, and $\llll=r/2+s$, leading to
\begin{equation*}
\begin{split}
I&\leq C\|f\|_r (h^{k+1}+ \varDelta t^{\frac{r}{2}+s}) \int_{\varDelta t}^{1}\frac{dt}{t}\leq C\|f\|_r ~h^{(k+1) \wedge a(\frac{r}{2}+s)}\log(h^{-a}). 
\end{split}
\end{equation*}
This method allows us to cover the entire range of possible values for $r$ with $k\geq 3$.

Finally, let $r\in(k+1-2s,4-2s)$ with either $k=1$ or $k=2$. There exist $\epsilon,\delta >0$ such that $r\geq k+1-2s+\epsilon$ with $\epsilon \leq 2s$, and $\frac{r}{2}+s+\delta\leq 2=\tau$. Thus, we can apply \eqref{eq:errorFEVidar} with $p=k+1-2s+\epsilon$, $l=k+1$, $\pp=r/2$, and $\llll=r/2+s+\delta$, and combine it with the bound on the $\beta_j$ to obtain
\begin{equation*}
\begin{split}
I&\leq C (h^{k+1}\|f\|_{k+1-2s+\epsilon} \int_{\varDelta t}^{1}\frac{dt}{t^{1-\frac{\epsilon}{2}}} + \varDelta t^{\frac{r}{2}+s+\delta}\|f\|_{r} \int_{\varDelta t}^{1}\frac{dt}{t^{1+\delta}} ) \leq C (h^{k+1}+ \varDelta t^{\frac{r}{2}+s})\|f\|_{r} . 
\end{split}
\end{equation*}

\textbf{3.}~We now bound the norm of the third term in \eqref{eq:splitsum}. For this we apply the triangle inequality, we exploit \eqref{eq:errorFEVidar}, and use that $1/t_j<1$ for all $j\geq N_1$. This time the simple choice $p=\pp=0$, $l=k+1$, $\llll=\tau$, allows us to conclude
\begin{equation*}
\begin{split}
\sum_{j=N_1}^{N_T} \|w_{k,\tau}(\cdot,t_j)-e^{t_j\Delta_\B}f(\cdot)\|_0~\beta_j &\leq C\|f\|_0 (h^{k+1}+ \varDelta t^\tau) \int_{1}^{\infty}\frac{dt}{t^{2-s}}\\
&\leq C\|f\|_r (h^{k+1}+ \varDelta t^\tau)\leq C\|f\|_r~h^{(k+1) \wedge a(\frac{r}{2}+s)},
\end{split}
\end{equation*}
where the last inequality comes from the fact that $\tau$ is suitably chosen for different ranges of $r$. 

\end{proof}
Due to the restrictions on $k$, the result on convex polygonal domains reads as follows.
\begin{proposition}\label{thm:FE3} Under the assumptions and notation of Theorem \ref{thm:convexPol1} we have that 
\[
\|Q^s_r[w_{1,\tau}]-Q^s_r[e^{\cdot \Delta_\B }f]\|_0\leq C_{r,h} ~ h^{\min\{2, a(\frac{r}{2}+s)\}} \|f\|_r.
\]
\end{proposition}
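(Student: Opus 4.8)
The plan is to observe that Proposition~\ref{thm:FE2} and its proof depend on the smoothness of $\Omega$ \emph{only} through the finite element estimate~\eqref{eq:errorFEVidar}; every other ingredient (the splitting~\eqref{eq:splitsum}, the bounds on the weights $\beta_j$ from Remark~\ref{rem:weights}, the elementary inequalities $1/t_j\le 2/t_{j+1}$, and the relation $\varDelta t\sim h^a$) is purely algebraic and insensitive to the geometry of the domain. Hence the first and decisive step is to justify that~\eqref{eq:errorFEVidar} remains valid on a convex polytope $\Omega\subset\R^n$, $n\in\{2,3\}$, provided one restricts to linear finite elements, i.e.\ $k=1$.

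For that step I would invoke the elliptic regularity theory on convex domains: by Grisvard~\cite{grisvard_book-1985}, if $\Omega$ is convex then the solution operator $E$ of $-\Delta\psi=g$, $\B(\psi)=0$, maps $L^2(\Omega)$ into $H^2(\Omega)\cap V$ with $\|Eg\|_{H^2}\le C\|g\|_{L^2}$, which is exactly the $H^2$-regularity needed for the approximation property (ii) of the operator $E_h$ to hold with $k=1$. Convexity is essential here: at a reentrant corner this $H^2$ bound would fail, and higher polynomial degrees $k\ge 2$ could not attain their nominal order because the $H^{k+1}$ regularity of $E$ is destroyed by corner singularities. With $H^2$-regularity in hand, the semi-discrete and fully discrete $L^2$-error bounds (Theorems~3.5 and~7.3 in~\cite{Vid97}), which only use the approximation property of $V_h$ and the regularity of $E$, combine to yield precisely~\eqref{eq:errorFEVidar} for $k=1$.

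Once~\eqref{eq:errorFEVidar} is available with $k=1$, the remainder is a verbatim repetition of the argument in Proposition~\ref{thm:FE2} with the parameter $k$ fixed to $1$. Concretely, I would reuse the three-term splitting~\eqref{eq:splitsum} and bound each piece exactly as before: the $\beta_0$-term contributes $h^{(k+1)\wedge a(\frac r2+s)}=h^{2\wedge a(\frac r2+s)}$; the ``small-time'' sum over $1\le j<N_1$ is treated in the three subranges $r\in[0,2-2s)$, the endpoints $r\in\{2-2s,\,4-2s\}$ (where the logarithmic factor $\log(h^{-a})$ arises through $\int_{\varDelta t}^{1} dt/t$), and $r\in(2-2s,4-2s)$; and the ``large-time'' tail over $N_1\le j\le N_T$ is controlled with $p=\pp=0$, $l=k+1=2$, $\llll=\tau$. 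Since $\min\{k+1,4\}-2s=2-2s$ when $k=1$, these subranges exhaust all admissible $r\in[0,4-2s]$, so no case is lost by fixing $k=1$; in fact the branches of the proof of Proposition~\ref{thm:FE2} that required $k\ge 3$ or $k=2$ simply do not occur. Substituting $k=1$ into the exponent $\min\{k+1,a(\frac r2+s)\}$ gives $\min\{2,a(\frac r2+s)\}$ and the stated constant $C_{r,h}$, completing the proof.

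The main obstacle is thus contained entirely in the first step: establishing that~\eqref{eq:errorFEVidar} survives on convex polytopes for $k=1$. This is not a difficulty in the quadrature or summation machinery, which is geometry-independent, but a point of finite element theory, and it hinges precisely on the convexity assumption that guarantees the $H^2$ elliptic regularity which linear elements---but not higher-order ones---can exploit.
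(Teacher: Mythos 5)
Your proposal is correct and follows essentially the same route as the paper: the paper's own proof simply notes (as stated at the start of Section~\ref{sec:FE}, with the elliptic-regularity discussion citing~\cite{grisvard_book-1985}) that \eqref{eq:errorFEVidar} remains valid for $k=1$ on convex polygonal domains, and then repeats the argument of Proposition~\ref{thm:FE2} verbatim. Your additional justification of the $H^2$-regularity step and your check that the $k=1$ subranges of $r$ exhaust $[0,4-2s]$ are accurate elaborations of exactly what the paper leaves implicit.
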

\begin{proof}
As mentioned at the beginning of this section, estimate \eqref{eq:errorFEVidar} holds for $k=1$ in the case of convex polygonal domains in dimensions two and three. The rest of the proof follows as for Proposition~\ref{thm:FE2}.
\end{proof}

We now prove our main results.

\begin{proof}[Proofs of Theorem \ref{thm:smooth1} and Theorem \ref{thm:convexPol1}]
Once the results on the quadrature rules and FE are established, the proof of our main result is straightforward and follows from the triangle inequality:
\begin{equation*}
\begin{split}
\|Q^s_r[w_{k,\tau}]-(-\Delta_\B)^{-s}f\|_{L^2(\Omega)}&=\|Q^s_r[w_{k,\tau}]-Q^s_r[e^{\cdot \Delta_\B }f]+Q^s_r[e^{\cdot \Delta_\B }f]-(-\Delta_\B)^{-s}f\|_{L^2(\Omega)}\\
&\leq \|Q^s_r[w_{k,\tau}]-Q^s_r[e^{\cdot \Delta_\B }f]\|_{L^2(\Omega)}+\|Q^s_r[e^{\cdot \Delta_\B }f]-(-\Delta_\B)^{-s}f\|_{L^2(\Omega)}.
\end{split}
\end{equation*}
The first term is bounded by the FE estimates given in Proposition \ref{thm:FE2} (resp. Proposition \ref{thm:FE3}), while the second term is bounded by the quadrature estimate of Theorem \ref{thm:interp}. 
\end{proof}

\section{Nonhomogeneous boundary conditions}\label{sec:NHBC}
Following the approach introduced in~\cite{CuTeGGPa18}, we define now a family of nonlocal operators that can handle nonhomogeneous boundary conditions. More precisely, let $u:\Omega\to\R$ be such that $\B(u)=g$ for some given $g\in\partial \Omega$. Let $v:\Omega\times [0,\infty)$ denote the solution of the following heat equation with initial condition~$u$ and nonhomogeneous boundary conditions:
\begin{equation}
\label{eq:heatg}
\left\{
\begin{array}{lcl}
\partial_t v-\Delta v = 0 & \quad & (x,t)\in \Omega\times (0,\infty),\\
v(x,0)=u(x)& & x\in \Omega,\\
\B(v(\cdot,t))(x)=g & & (x,t) \in \partial \Omega\times [0,\infty).
\end{array}
\right.
\end{equation}
Generalizing \eqref{eq:FLdef2}, the nonlocal operator $\LBs$ on the given $u$ is defined as follows~\cite{CuTeGGPa18}:
\begin{equation*}\label{eq:fraclapnon}
\LBs [u](x):=\frac{1}{\Gamma(-s)}\int_{0}^\infty \left(v(x,t)-v(x,0)\right) \frac{d t}{t^{1+s}}, \quad \forall x\in \Omega.
\end{equation*}
Let $z$ be the steady-state of \eqref{eq:heatg}, that is, the solution of the local elliptic problem
\begin{equation*} \label{eq:ellip}
	\left\{
	\begin{array}{lcl}
	\Delta z = 0 & \quad & x\in \Omega\\
	\mathcal{B}(z)=g & & x \in \partial \Omega.
	\end{array}
	\right.
\end{equation*}
As proved in \cite{CuTeGGPa18}, if $z$ is regular enough we have that 
\begin{equation}\label{eq:relationHoNHo}
\LBs [u](x)=(-\Delta_{\B})^s [u-z](x) \qquad \textup{for all} \qquad x\in\Omega.
\end{equation}
We refer to \cite{GiTr01,lions_etal-book1968a} for results on the regularity of $z$ depending on the domain $\Omega$, the type of boundary condition $\B$ and the boundary data $g$. In Section 5.2 and 5.3 of~\cite{CuTeGGPa18}, some results and examples in this direction can also be found.

Associated to the operator $\LBs$ we consider the following nonhomogeneous elliptic problem:
\begin{equation}
	\label{eq:NHdirich}
	\left\{
	\begin{array}{lcl}
	\LBs[u](x) = f(x) & \quad & x\in \Omega\\
	\mathcal{B}(u)(x)=g(x) & & x \in \partial \Omega.
	\end{array}
	\right.
\end{equation}
In view of \eqref{eq:relationHoNHo}, problem \eqref{eq:NHdirich} can be reformulated in terms of $\psi:=u-z$ as  
\begin{equation}
	\label{eq:NHdirich2}
	\left\{
	\begin{array}{lcl}
	(-\Delta_{\B})^s [\psi](x) = f(x) & \quad & x\in \Omega\\
	\mathcal{B}(\psi)(x)=0 & & x \in \partial \Omega.
	\end{array}
	\right.
\end{equation}
The solution of \eqref{eq:NHdirich2} is $\psi=(-\Delta_\B)^{-s}f$ and hence the solution of \eqref{eq:NHdirich} can be expressed as
\begin{equation}\label{eq:reationBBproblem}
(\LBs)^{-1}[f](x):=(-\Delta_\B)^{-s}f(x)+z(x).
\end{equation}
Using \eqref{eq:reationBBproblem} and assuming that $z\in H^{l+1}(\Omega)$, if we let  $z_l$ denote a FE approximation of $z$ of order~$l+1$, then a natural numerical approach to the solution of \eqref{eq:NHdirich} is given by $Q^s_r[w_{k,\tau}]+z_l$ were $Q^s_r[w_{k,\tau}]$ is the numerical approximation of $(-\Delta_\B)^{-s}f$ studied in this manuscript. A simple triangle inequality allows us to use the results of our previous theorems to get error estimates of the form 
\[
\|Q^s_r[w_{k,\tau}]+z_l-(\LBs)^{-1}[f]\|_{L^2(\Omega)}\leq C ~\left(h^{\min\{k+1,a(\frac{r}{2}+s)\}}~ \|f\|_{\HH^r(\Omega)}+ h^{l+1}\|z\|_{H^{l+1}(\Omega)}\right).
\]

\section{Numerical tests}
\label{sec:numtests}
In this section, we provide some numerical evidence in support of our main theoretical results. We provide experiments both in one and two spatial dimensions, considering right-hand sides with different regularities and testing both the quadrature approximation and its combination with FE.

\textbf{One-dimensional test: $Q_r^s[e^{\cdot \Delta_\B}f]$ versus $Q_r^s[w_{k,\tau}]$.} We start by considering a one-dimensional example in which the analytic solution is known. Specifically, we consider as right-hand side for the fractional elliptic problem \eqref{EP}-\eqref{BC} on $\Omega=(0,1)$ with homogeneous Dirichlet boundary conditions the following function:
\begin{equation}\label{eq:firsteig}
f(x)= \pi^{2s} \sin(\pi x)=\lambda_1^s~ \varphi_1(x),
\end{equation}
with $\{\lambda_1, \varphi_1 \}$ the first eigenpair of the Dirichlet Laplacian. Then, the solution $u=\DBm f$ is trivially the first eigenfunction $\varphi_1$ itself. Note that, by definition, all eigenfunctions $\varphi_m$ belong to $ \HH^r$ for any $r\geq 0$ so that for this numerical test we are free to select the value of $r$ to be used in the quadrature approximation. 

In Figure~\ref{fig:1Deigf} we compare the error obtained when our quadrature is applied to the exact heat semigroup with the error obtained when our fully discrete method is used (that is, when the heat semigroup is in turn approximated via FE). In fact, when $f=\lambda_1^s\varphi_1$, the heat semigroup is nothing but $e^{t \Delta_\B}f=e^{-\lambda_1 t}\lambda_1^s\varphi_1$ and hence it can be evaluated exactly for all $t\geq 0$. For this particular example we set $r=1.5$, we consider three different values of $s$, and select $\varDelta t \sim h^2$ in both the semi-discrete and fully-discrete approximations. Note however, that in the semi-discrete case $h$ is nothing but an input parameter for the selection of the quadrature nodes location and no real spatial discretization is considered.

\begin{figure}[h!]
	\centering
	\includegraphics[width=.45\textwidth]{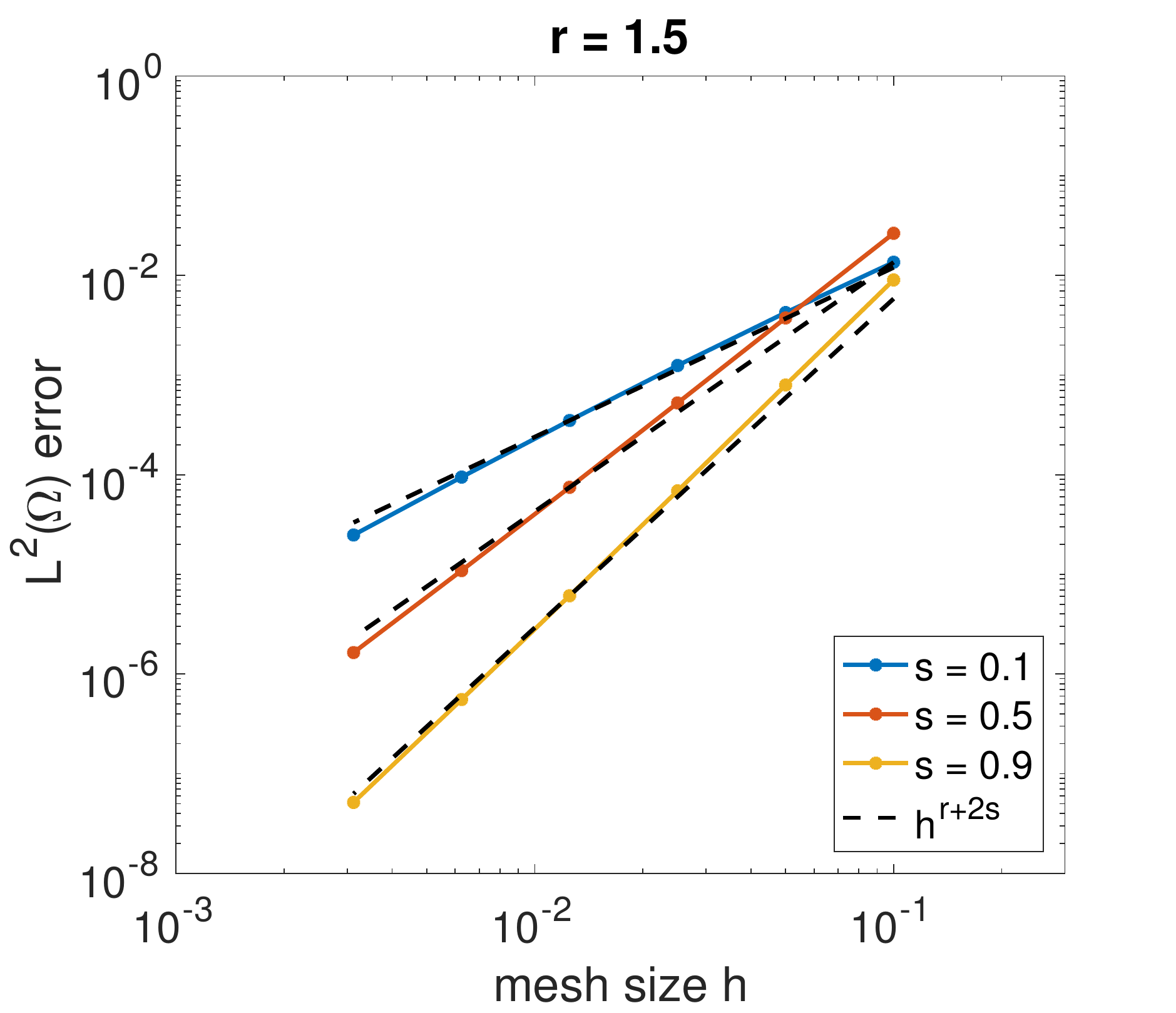}
	\includegraphics[width=.45\textwidth]{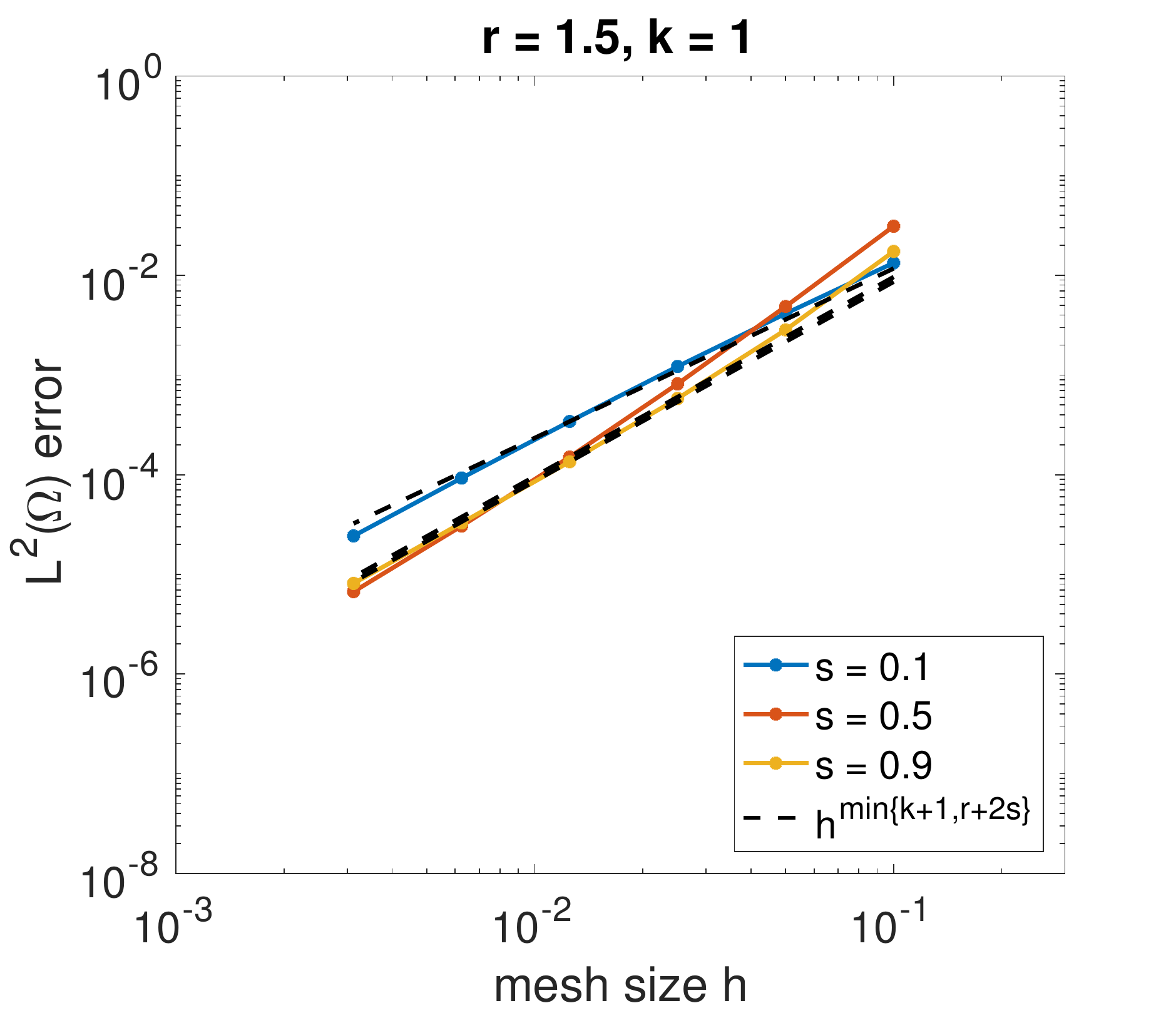}
	\caption{$\|(-\Delta_B)^{-s}f-Q_r^s[e^{\cdot \Delta_\B}f]\|_{0}$ (left) and $\|(-\Delta_B)^{-s}f-Q_r^s[w_{k,\tau}]\|_{0}$ (right) for different values of~$s$, $ \Omega=(0,1)$,  and $f$ given by \eqref{eq:firsteig}. The quadrature is computed with $r=1.5$ and $\varDelta t\sim h^2$, while the FE approximation $w_{k,\tau}$ is obtained with $k=1$ and $\tau=2$.}
	\label{fig:1Deigf}
\end{figure}

As we can see on the left of Figure~\ref{fig:1Deigf}, the error $\|\DBm f- Q_r^s[e^{\cdot \Delta_\B}f]\|_0$ as a function of the parameter~$h$ behaves like $\mathcal{O}(h^{r+2s})$ and hence it is perfectly in line with the result of Theorem~\ref{thm:interp} for $\varDelta t\sim h^a$ with $a=2$. On the other hand, when the FE approximation of the heat semigroup is introduced, as stated in Theorem~\ref{thm:smooth1} we expect the error to become bounded by the FE spatial discretization whenever $k+1<a(r/2+s)$. Since $r=1.5$, $a=2$, and $k=1$, this is observed  on the right of Figure~\ref{fig:1Deigf} for the cases $s=0.5$ and $s=0.9$, where the decay in $\|\DBm f- Q_r^s[w_{k,\tau}]\|_0$ behaves  as $\mathcal{O}(h^2)$. 

\textbf{One-dimensional test: quadratic convergence and computational cost.} In Figure~\ref{fig:1Deigf2}, 
we let~$f$ be as in~\eqref{eq:firsteig} but we choose a higher value for the regularity parameter (namely, $r=2$).  Errors  are always computed with respect to the fully-discrete approximation $Q_r^s[w_{k,\tau}]$. 
The parameter choice for  Figure~\ref{fig:1Deigf2} (left) is $r=2$, $k=1$, $a=2$ and results in an error that is  controlled by the FE approximation in space for all values of $s$. Therefore, as discussed in Section~\ref{sec:computcost}, the parameter $a$ could be tuned to reduce computational costs while still preserving quadratic convergence. The ``optimal'' choice of $a$ in this case is $a_{\rm opt}=2/(1+s)$. The corresponding numerical error is shown in Figure~\ref{fig:1Deigf2} (right), while the number of quadrature points required by our numerical method is given in Table~\ref{tab:cost} exhibiting significant advantages for $a_{\rm opt}$.

\begin{figure}[h!]
	\centering
	\includegraphics[width=.45\textwidth]{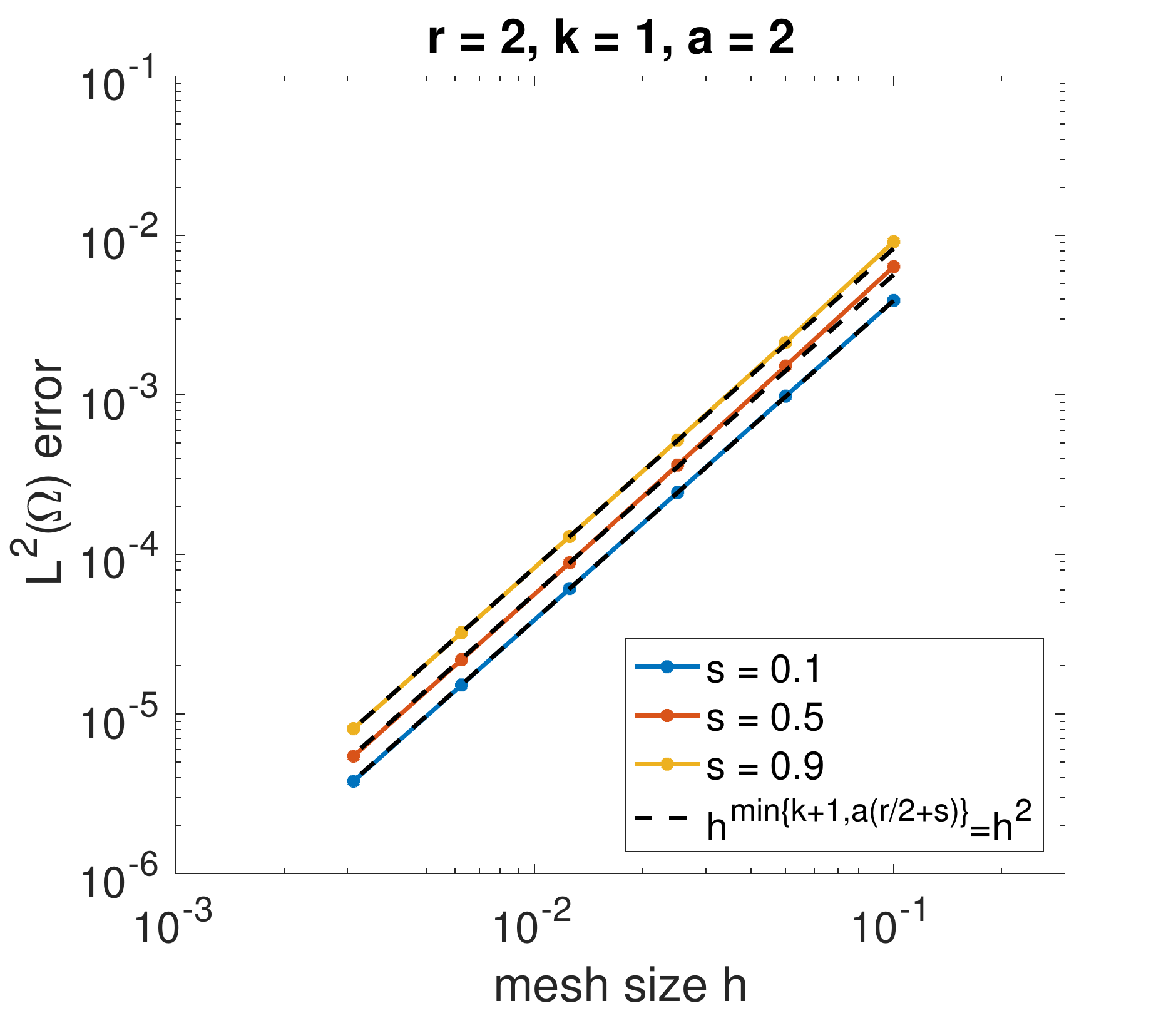}
	\includegraphics[width=.45\textwidth]{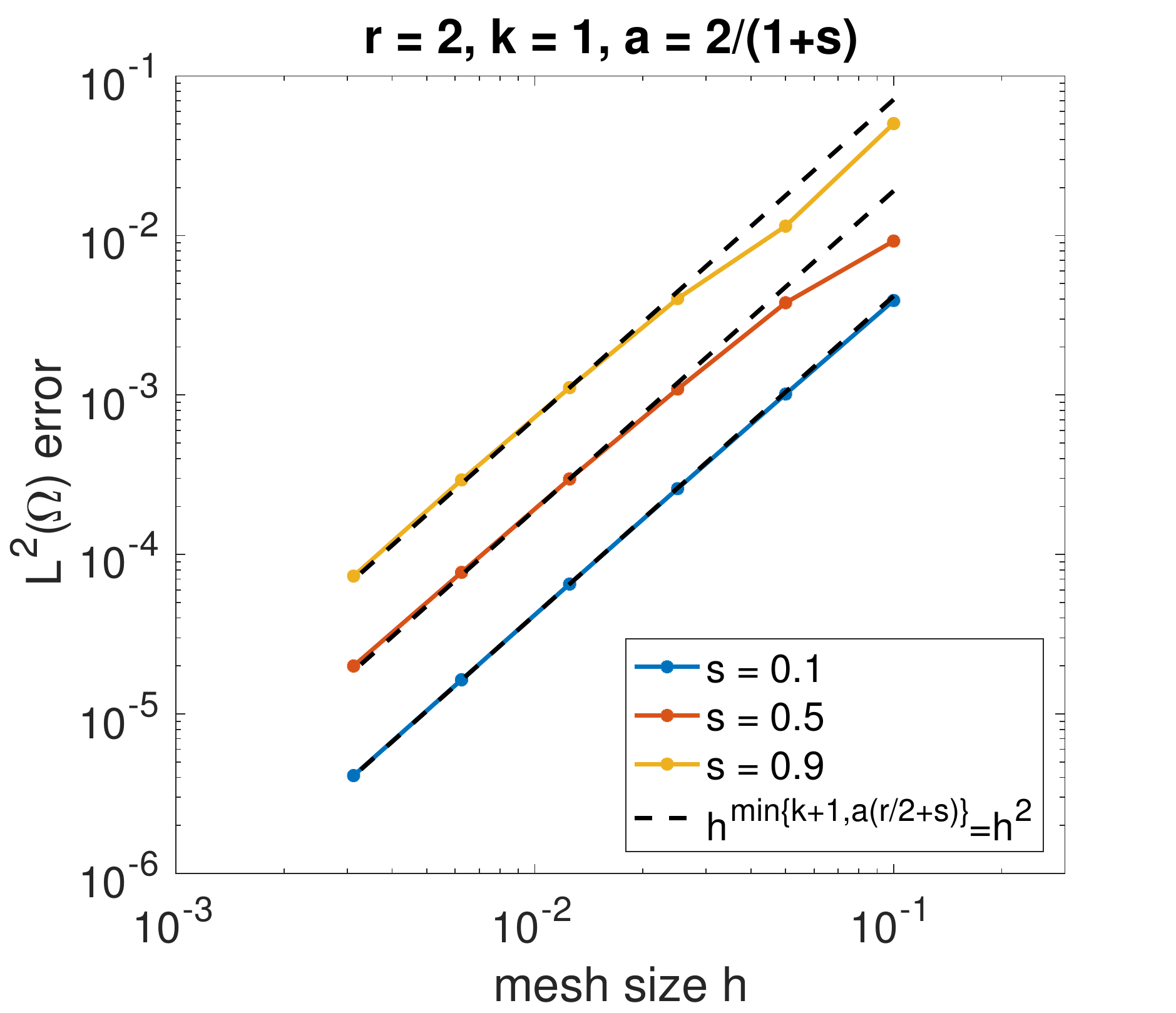}
	\caption{$\|(-\Delta_B)^{-s}f-Q_r^s[w_{k,\tau}]\|_0$ for different values of $s$ and $f$ defined by~\eqref{eq:firsteig} on $\Omega=(0,1)$. We set $r=2$, $k=1$, $\tau=2$,  $a=2$ (left), and $a=\frac{2}{(1+s)}$ (right).}
	\label{fig:1Deigf2}
\end{figure}
\begin{table}[h!]
	\centering
	\begin{tabular}{|c|cc|cc|cc|}\hline
		& \multicolumn{2}{c|}{$s=0.1$}&\multicolumn{2}{c|}{$s=0.5$}&\multicolumn{2}{c|}{$s=0.9$}\\ 
		$h$&$a=2$&$a_{\rm opt}$&$a=2$&$a_{\rm opt}$&$a=2$&$a_{\rm opt}$\\ \hline
		0.1&38&23&51&7&66&3\\
		0.05&200&105&279&23&369&10\\
		0.025&1012&465&1432&75&1899&26\\
		0.0125&4921&1991&7011&233&9290&67\\
		0.00625&23248&8293&33217&702&43945&167\\ 
		0.003125&107462&33809&153684&2058&202959&407\\ \hline
	\end{tabular}
	\caption{Number of quadrature nodes $N_T$ as a function of $s$ and $h$ for two choices of $a$ giving quadratic convergence: $a=2$ and $a_{\rm opt}=\frac{2}{(1+s)}$. The other simulation parameters are $k=1$, $\tau=2$, and $r=2$.}
	\label{tab:cost}
\end{table}

\textbf{One and two-dimensional tests: low regularity (and incompatible) right-hand side.} We then consider two examples of incompatible, low regularity data $f$ in one and two-dimensional domains, respectively. The first case, illustrated in Figure~\ref{fig:low1D}, is of $f(x)=x^{-0.5+\epsilon}$ on $\Omega=(0,1)$, which is merely a function in $L^2(\Omega)$. Here, the parameter $r$ cannot be chosen freely but rather corresponds to the regularity of the considered right-hand side, that is, $r=0$. The exact solution can be expressed as a series, namely, $u=\sum_{m=1}^\infty \lambda_m^{-s}\hat{f}_m \varphi_m$, involving the eigenpairs of the Dirichlet Laplacian on the interval considered (explicitly known on one-dimensional intervals). However, the value of the integral coefficients $\hat{f}_m$ cannot be computed exactly and a numerical approximation would have to be introduced. Nevertheless, by computing a reference solution on a very fine mesh (i.e., $h=7.8125\cdot 10^{-5}$) and calculating the approximation error of our method (with $k=1$, $\tau=1$, and $a=1$) against it, we still recover the expected decay rates (i.e., $\mathcal{O}(h^s)$).

\begin{figure}[h!]
	\centering
	\includegraphics[width=.45\textwidth]{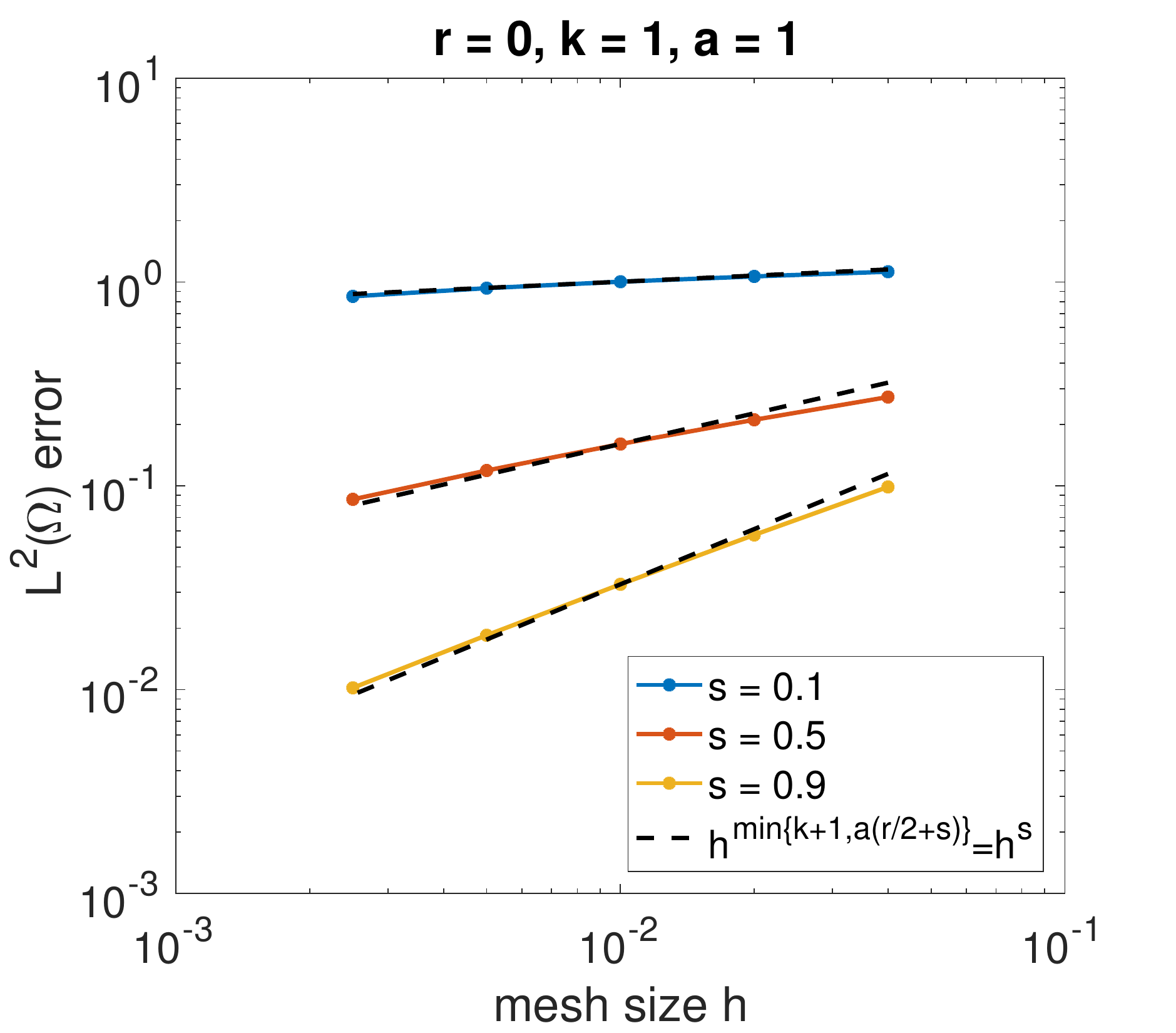}
	\caption{Error decay in the approximation of $(-\Delta_B)^{-s}f$ for different values of $s$, with $\Omega=(0,1)$ and $f=x^{-0.5+\epsilon} \in L^2(\Omega)$ so that $r=0$. Numerical approximation computed with $k=1$, $\tau=1$, and $a=1$.}
	\label{fig:low1D}
\end{figure}

The second example of right-hand side of \eqref{EP}-\eqref{BC} with low regularity is given by the ``checkerboard'' function
\[
f(x,y)=\left\{
\begin{array}{rl}
1 & \text{if } (x-0.5)(y-0.5)>0\\
-1 & \text{elsewhere}, 
\end{array}
\right.
\]
on the two-dimensional domain $\Omega=(0,1)\times(0,1)$. In this case, $f\in \HH^r$ with $r=0.5-\epsilon$ for all $\epsilon>0$ and once again the exact solution can be expressed as a series involving the eigenpairs of the Dirichlet Laplacian (still known explicitly in the case of square $\Omega$). Moreover, this time the integral coefficients $\hat{u}_m$ can be computed exactly and the exact solution can be easily obtained by truncating its series expansion after a sufficiently large number of terms. We do so by using $102400$ terms and plot the numerical error obtained through our approximation strategy with two values of $k$ in Figure~\ref{fig:low2D}. We refer the reader to the work by Bonito and Pasciak~\cite{Bon2015} for an alternative discretization method used to solve this two-dimensional problem and for illustrations of the solution profile for some selected values of $s$.

\begin{figure}[h!]
	\centering
	\includegraphics[width=.45\textwidth]{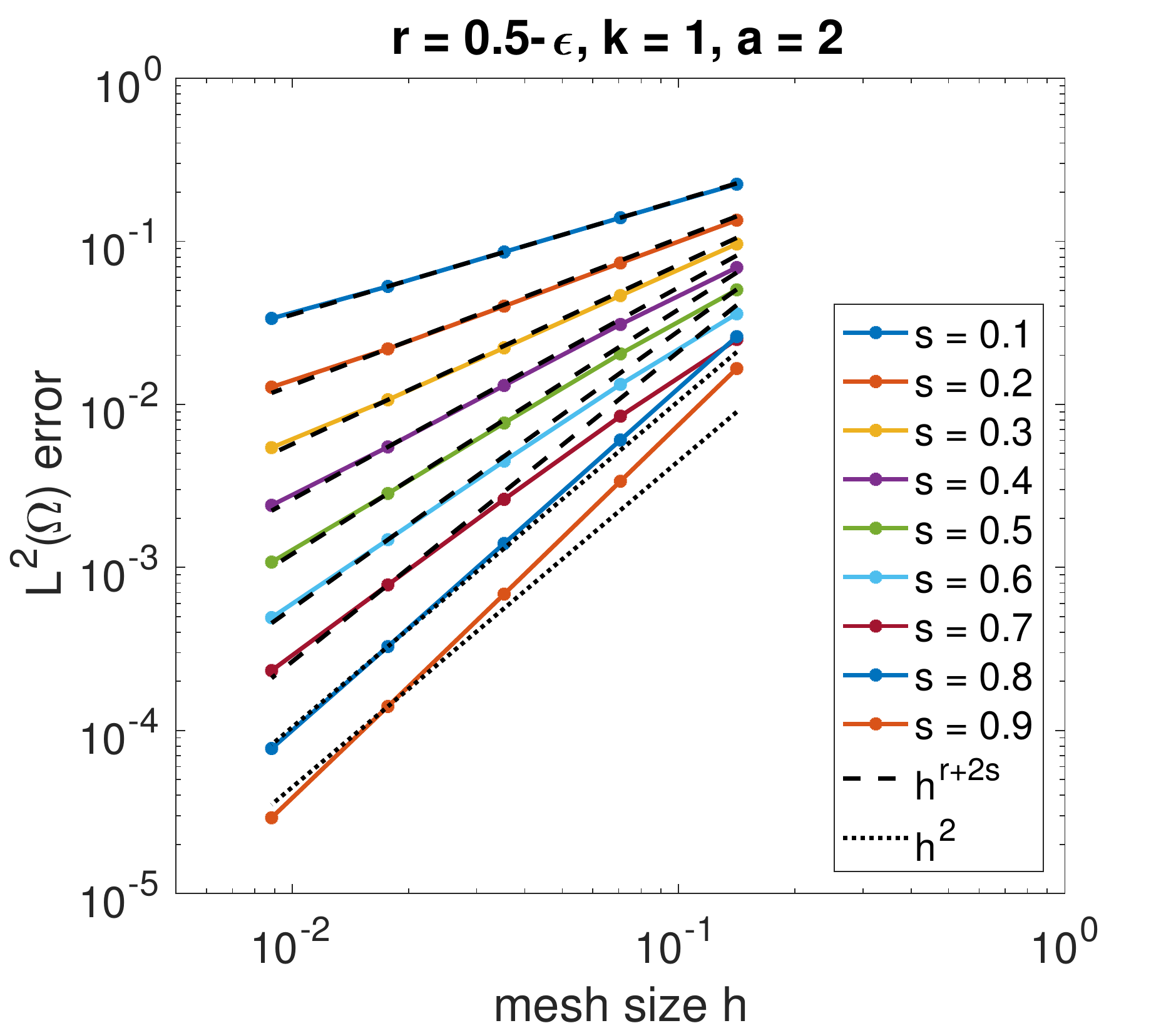}
	\includegraphics[width=.45\textwidth]{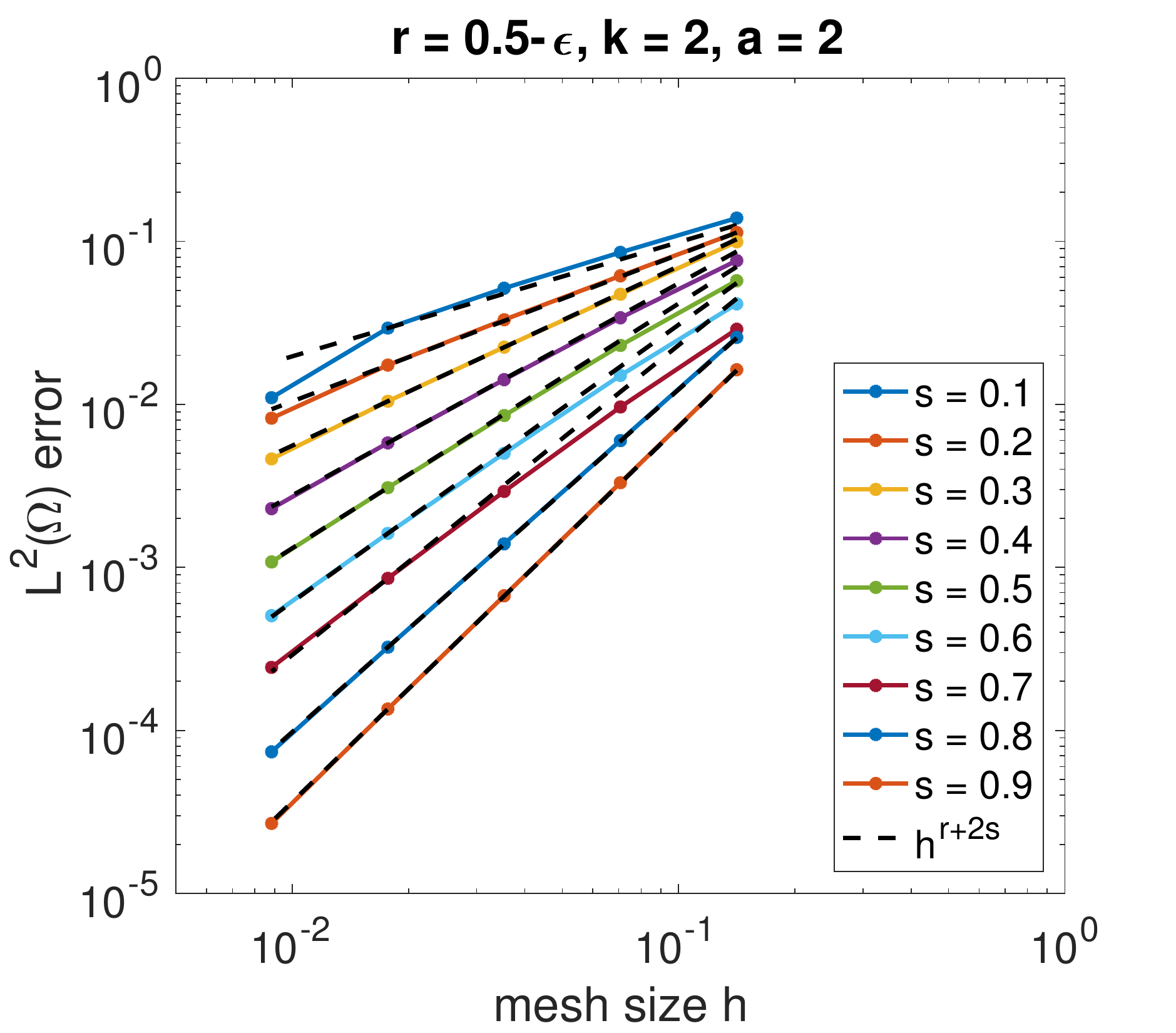}
	\caption{Error decay in the approximation of $(-\Delta_B)^{-s}f$ for different values of $s$ and with $f$ the ``checkerboard'' function on $\Omega=(0,1)\times(0,1)$ so that $r=0.5-\epsilon$. Numerical approximation computed with either $k=1$ (left) or $k=2$ (right), $\tau=2$, and $a=2$.}
	\label{fig:low2D}
\end{figure}

On the left of Figure~\ref{fig:low2D}, we show the error obtained for nine different values of $s$ when linear FE are considered and obtain results in agreement with the ones proposed by Bonito and Pasciak~\cite{Bon2015}. In fact, when $s<(2-r)/2=0.75$, we observe that the error decay of our numerical method is essentially controlled by the quadrature approximation, decaying as $\mathcal{O}(h^{r+2s})$ (since $a=2$). On the other hand, when $s>(2-r)/2$ (i.e., for $s=0.8$ and $s=0.9$), we expect the bound coming from the FE (namely $\mathcal{O}(h^{2})$) to become the dominant term and hence asymptotically control the decay. Our numerical tests for these two values of $s$ seem actually to behave a little better than expected but this is likely due to the fact that the asymptotic regime was not yet reached by the smallest value of $h$ considered. 
Nevertheless, as shown on the right of Figure~\ref{fig:low2D}, this limitation is overcome when quadratic FE are considered, since the error decays as $\mathcal{O}(h^{r+2s})$ for all considered values of $s$. Therefore, in this particular example, even though the considered domain is non-smooth, the choice of second order FE in space allows us to obtain an improvement in the result of Theorem~\ref{thm:convexPol1} and hence produce improved convergence rates for values of $s>0.75$. 

Although a detailed discussion of this is out of the scope of the presented work, we believe that this improved result is due to the fact that well-known limited regularity estimates for the standard Dirichlet elliptic problem can be improved on plane convex polygonal domains for which a suitable control on interior angles holds (e.g., see \cite{bacuta_etal-2003}). This feature is likely to entail better FE estimates in space for the corresponding heat equation solution that in turn would carry over to the approximation of the fractional Poisson solution proposed in this work (as shown in Section~\ref{sec:FE}). Specifically, through the use of FE of order $k>1$, we expect to be able to obtain, on plane convex polygonal domains, errors $\mathcal{O}(h^{2+\gamma})$ for some $0<\gamma<1$ depending on the improvement on the elliptic regularity allowed by the angle control, instead of simply $\mathcal{O}(h^{2})$, so that the error decay of our method would become of the form $\mathcal{O}(h^{\min\{k+1,2+\gamma,a(r/2+s)\}})$.

\textbf{One-dimensional test: different types of boundary conditions.} To conclude this section we show an example of how the solution to the fractional Poisson problem~\eqref{EP}-\eqref{BC} varies when the fractional operator is coupled to different boundary conditions. The domain $\Omega=(0,1)$, the fractional power $s=0.3$, and the right-hand side $f$ of the problem are the same for all three fractional operators. The datum is chosen as 
\[
f(x)=\left\{
\begin{array}{rl}
1 & \text{if } (x-0.5)<0\\
-1 & \text{elsewhere}, 
\end{array}
\right.
\] 
and hence (as in the two-dimensional case previously considered) $r=0.5-\epsilon$. The datum $f$ and the corresponding solution to the fractional Poisson equation with either homogeneous Dirichlet, Neumann, or Robin (with $\kappa=1$ on $\partial \Omega$) boundary conditions are shown on the left of Figure~\ref{fig:BCs}. The substantial difference between the three cases considered is clearly visible. The corresponding approximation error as a function of the mesh size $h$ is reported in the plot on the right of Figure~\ref{fig:BCs} and once again is in perfect agreement with the result of our main theorem.

\begin{figure}[h!]
	\centering
	\includegraphics[width=.45\textwidth]{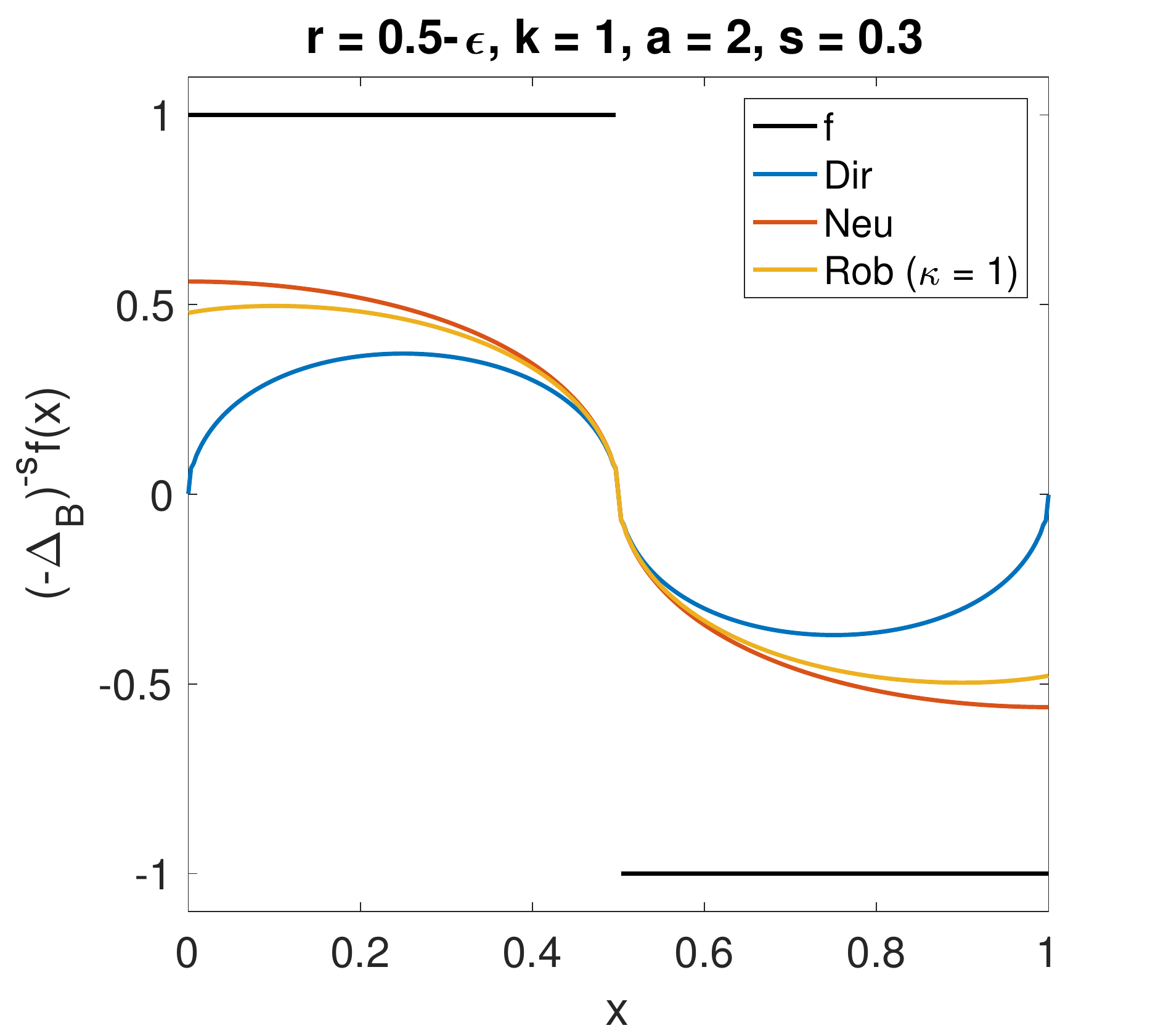}
	\includegraphics[width=.45\textwidth]{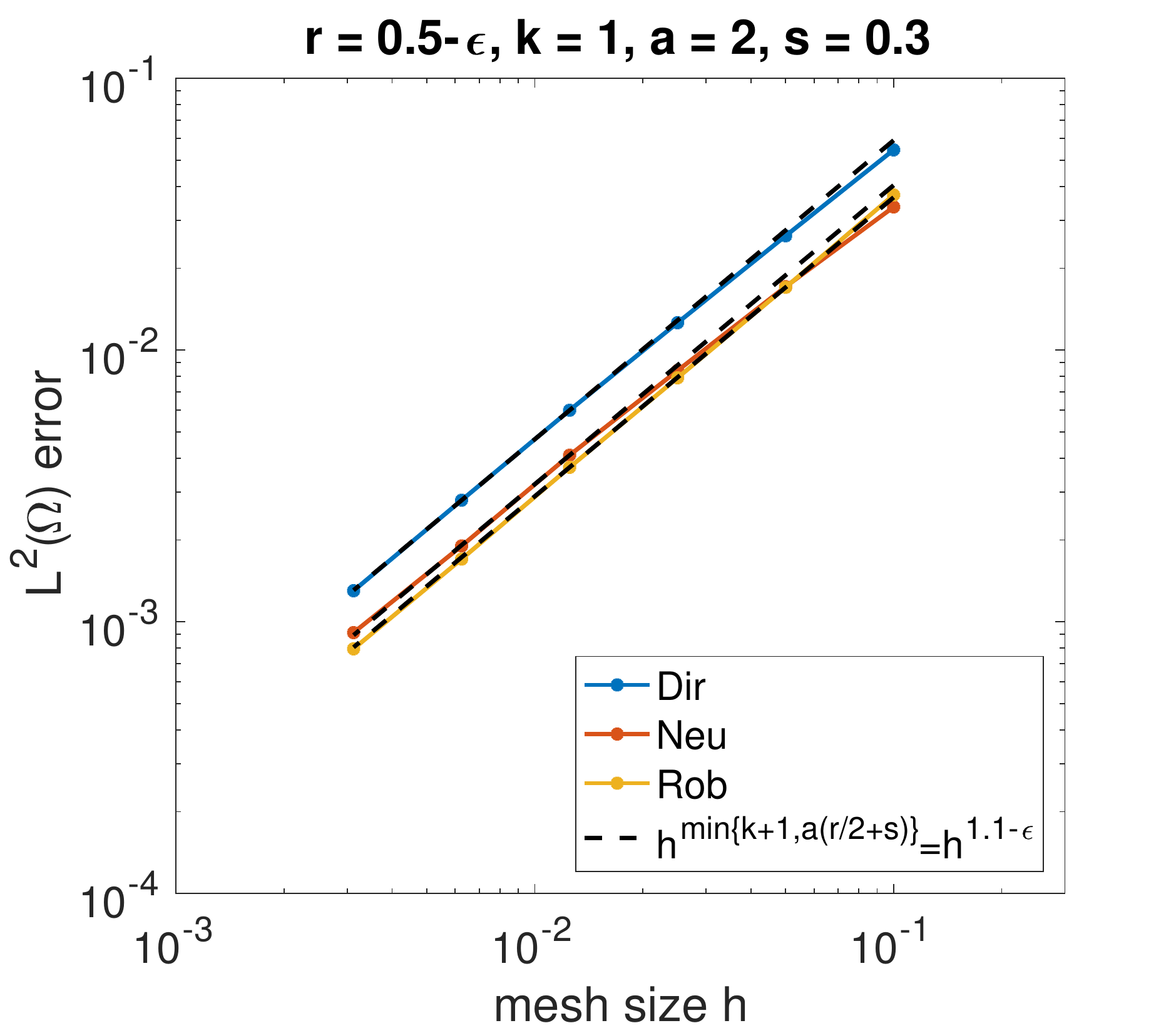}
	\caption{Left: Solution $\DBm f$ of the fractional Poisson equation on $\Omega=(0,1)$ with the same data $f$ and same fractional exponent $s$. Right: Error decay in the approximation of $(-\Delta_B)^{-s}f$ corresponding to the three boundary operators (Dirichlet, Neumann, and Robin with $\kappa=1$ on $\partial \Omega$). Numerical approximation computed in all cases with $r=0.5-\epsilon$, $k=1$, $\tau=2$, and $a=2$.}
	\label{fig:BCs}
\end{figure}

\section{Comments, possible extensions and open problems}
\label{sec:comments}
We conclude this manuscript by providing some possible directions in which the current work could be further developed. 

\noindent$\bullet$
As mentioned in the introduction, the method of semigroups is not the only option available in the literature to solve numerically the fractional Poisson problem~(\ref{EP})-(\ref{BC}). While here our aim is to address what we see as a gap in the literature by exploiting the well-established method of semigroups in order to propose a new numerical approach for the solution of the considered fractional problem, questions regarding the optimal implementation of the method and its performance compared to other strategies remain open. 
For example, in the present work we consider a uniform discretization for the integration variable $t$ because of its simplicity. This already allows us to obtain optimal or near optimal error estimates, but clearly more sophisticated discretization approaches could be used (such as adaptive grids, better exploiting the heat kernel decay in $t$ and the singularity of the integration measure) to reduce the computational cost of the proposed algorithm. 

Overall, the main differences we see in adopting the method of semigroups compared to other strategies proposed in the literature could be summarised as follows. Our method does not rely on a direct computation of the eigenpairs (as in \cite{song_etal-2017}) of $-\Delta_\B$, that can be a heavy computational task, especially for irregular geometries.
Compared to the extension method in \cite{NoOtSa15}, our approach can handle all powers $s\in(-1,1)$ using the local semigroup $e^{t\Delta}$ which does not depend on $s$, while in \cite{NoOtSa15} the associated local problem changes with $s$. Moreover, while our approach requires the computation of the heat equation solution at various time points (either by solving a sequence of linear system in the implicit case or simply by computing matrix-vector products in the explicit one), the authors of~\cite{NoOtSa15} need to solve only one local problem that involves however an additional spatial dimension, entailing the solution of a much larger linear system of equations. Finally, both our method and the one relying on \eqref{eq:invPowerBo} by Bonito and Pasciak~\cite{Bon2015} consist in approximating the solution of~(\ref{EP})-(\ref{BC}) as a weighted sum of vectors. In our approach, these vectors can be computed iteratively, either by solving linear systems of equations associated to the same matrix (implicit case) or through the computation of matrix-vector products with a fixed matrix (explicit case). On the contrary, the method of \cite{Bon2015} requires the solution of a sequence of linear systems all associated to different matrices, but is well-suited to parallel implementation (clearly an advantage in presence of multi-core processors or HPC resources).
 
A detailed comparison of all existing methods for the same operator would certainly be of great practical interest. However, we believe that, for such a numerical comparison to be effective, extensive knowledge of each and every approach would be required. Moreover, this analysis would be especially meaningful if all algorithms were implemented by the respective developers (as to take advantage of their best features). This should be a joint effort of the numerical community dealing with this topic that could surely be beneficial to many researchers interested in the applications of these methods.  

\noindent$\bullet$ An interesting question is whether this numerical approach can handle right-hand sides  $f\in \HH^r(\Omega)$ with $r\in(-2s,0)$ since the analytical solution $u := (-\Delta_\B)^{-s} f$ of \eqref{EP}-\eqref{BC} exists and belongs to $\HH^{r+2s}(\Omega)$, a Sobolev space with positive exponent. We can extend the results of Lemma \ref{lem:splitinterp} to this range too. More precisely, for $r\in(-2s,0)$, one can define the interpolant 
\[
\mI_r[\rho](t)= \left\{ \begin{array}{ll}
0 &t\in[t_0, t_1),\\
\rho(t_j)& t\in[t_j,t_{j+1}), \forall j \geq 1.\\
\end{array} \right.
\]
and the associated quadrature $Q_r^s$. With these considerations, the results of Theorem \ref{thm:interp} also hold in the range $r\in(-2s,0]$. However, to the best of our knowledge, FE estimates in the form of \eqref{eq:errorFEVidar} are not available in the literature for initial data in negative Sobolev spaces. Hence, a generalization of our result to the considered range would perhaps require an \textit{ad-hoc} modification of our strategy to accommodate also the scenario of right-hand sides in negative fractional Sobolev spaces.

\noindent$\bullet$ Another possible extension of the proposed results concerns non-convex  polygonal  domains. In this context, the limited regularity of the standard elliptic problem due to the presence of singularities near the re-entrant corners of the domain typically results in an additional reduction (with respect to the case of convex domains) in the optimal order of convergence of the FE approximation. In fact, on quasi-uniform grids for non-convex domains, \eqref{eq:errorFEVidar} no longer holds even for linear FE ($k=1$). Nevertheless, as shown by Chatzipantelidis et al.~in~\cite{chatzipantelidis_etal-bit-2006}, by suitably refining the discretization mesh near the re-entrant corners, optimal order of convergence $\mathcal{O}(h^2)$ can be recovered. Therefore, we expect that by suitably adapting the FE settings (and ensuring a sufficiently small choice for $\varDelta t$ in our quadrature rule), our main result would also hold with $k=1$ on non-convex polygonal domains. 

\noindent$\bullet$ In view of the results and strategies of the present article, it seems very natural and straightforward to extend our results to powers of more general second order differential operators and boundary conditions. The results on quadrature rules do not need any modification (beside a natural change in the functional space settings). FE estimates like \eqref{eq:errorFEVidar}, which are one of the main tools in our proofs, hold for more general operators as shown in \cite{Vid97} and \cite{QV2008}.

\noindent$\bullet$ As previously mentioned, the method by Bonito and Pasciak \cite{Bon2015} can be implemented in parallel, thus allowing faster computations. One possible extension of this work would be the investigation of parallel computing methods for the strategy presented in this work and the investigation of other possible approaches to reduce computational costs of our technique.

\section*{Acknowledgements}
We would like to thank the anonymous reviewers of this work for their helpful comments to improve the quality of our manuscript. 
This research is supported by the Basque Government through the BERC 2018-2021 program and by the Spanish Ministry of Science, Innovation and Universities: BCAM Severo Ochoa accreditation SEV-2017-0718. N.C. and L.G.G. are also supported by the Spanish ``Plan Estatal de Investigaci\'on, Desarrollo e Innovaci\'on Orientada a los Retos de la Sociedad'' under Grant BELEMET - Brain ELEctro-METabolic modeling and numerical approximation (MTM2015-69992-R) and by the Spanish Ministry of Economics and Competitiveness MINECO through the grant RTI2018-093416-B-I00, while F.d.T. is also supported by the Toppforsk (research excellence) project Waves and Nonlinear Phenomena (WaNP), grant no. 250070 from the Research Council of Norway, by the MEC-Juan de la Cierva postdoctoral fellowship number FJCI-2016-30148 and by the Spanish research project PGC2018-094522-N-100 from the MICINNU.

\setlength{\itemsep}{-1em}
\setlength{\parskip}{0em}
\footnotesize
\bibliographystyle{plain}

\Addresses

\end{document}